\newtheorem{thm}[equation]{Theorem}
\newtheorem{examp}[equation]{Example}
\newtheorem{prop}[equation]{Proposition}
\newtheorem{rem}[equation]{Remark}
\theoremstyle{remark}	  
\theoremstyle{definition} 
\newtheoremstyle{efronremark}
{6pt}{6pt}{}{}{\itshape}{\quad}{ }{\thmnote{#3}}
\theoremstyle{efronremark}   \newtheorem*{eremark}{}
\numberwithin{equation}{section}
\title{Representations of the Rook-Brauer Algebra}
\author{Elise delMas\\  Department of Mathematics \\   Macalester College \\   St. Paul, MN 55105
\and
Tom Halverson\footnote{Supported in part by NSF Grant DMS-0800085.}
 \\  Department of Mathematics \\  Macalester College \\   St. Paul, MN 55105
}
\DeclareMathOperator{\End}{End}
\DeclareMathOperator{\Res}{Res}
\DeclareMathOperator{\Ker}{\mathbf{Ker}}
\date{June 20, 2012;  revised July 20, 2012}
\begin{document}

\newcommand{\SU}{\mathsf{SU}}
\newcommand{\SL}{\mathsf{SL}}
\newcommand{\CC}{\mathbb{C}}
\newcommand{\KK}{\mathbb{K}}
\newcommand{\ZZ}{\mathbb{Z}}
\newcommand{\V}{\mathsf{V}}
\newcommand{\VV}{\mathsf{V}}
\newcommand{\G}{\mathsf{G}}
\newcommand{\W}{\mathsf{W}}
\newcommand{\UU}{\mathsf{U}}
\newcommand{\U}{\mathsf{U}}
\renewcommand{\v}{\mathsf{v}}
\newcommand{\w}{\mathsf{w}}
\newcommand{\e}{\mathsf{e}}
\newcommand{\p}{\mathsf{p}}
\newcommand{\q}{\mathsf{q}}
\renewcommand{\b}{\mathsf{b}}
\newcommand{\f}{\mathsf{f}}
\newcommand{\Z}{\mathsf{Z}}
\newcommand{\M}{\mathsf{M}}
\newcommand{\RB}{\mathsf{RB}}
\newcommand{\RBc}{\mathcal{R\! B}}
\newcommand{\GL}{\mathsf{GL}}
\newcommand{\TL}{\mathsf{TL}}
\newcommand{\T}{\mathsf{T}}
\newcommand{\J}{\mathsf{J}}
\renewcommand{\Im}{\mathrm{Im}}
\newcommand{\rank}{\mathrm{rank}}
\newcommand{\mult}{\mathrm{mult}}
\def\ot{\otimes}
\def\id{{\rm id}}

\newcommand{\C}{\mathsf{C}}
\newcommand{\D}{\mathsf{D}}
\renewcommand{\T}{\mathsf{T}}
\renewcommand{\O}{\mathsf{O}}
\newcommand{\I}{\mathsf{I}}
\maketitle

\begin{abstract}
\noindent
 We study the  representation theory of  the rook-Brauer  algebra $\RB_k(x)$, which has a of Brauer diagrams that allow for the possibility of  missing edges.  The Brauer,  Temperley-Lieb,  Motzkin, rook monoid, and symmetric group algebras are subalgebras of $\RB_k(x)$.   We  prove that $\RB_k(n+1)$  is the centralizer  of the orthogonal group $\O_n(\CC)$  on tensor space and that $\RB_k(n+1)$ and $\O_n(\CC)$ are in Schur-Weyl duality.  When $x \in \CC$ is chosen so that $\RB_k(x)$ is semisimple, we use its Bratteli diagram  to explicitly construct a complete set of irreducible representations for $\RB_k(x)$.  
\end{abstract}

\begin{eremark}[Keywords:] Brauer algebra,  rook monoid, orthogonal group, Schur-Weyl duality, seminormal representation.
\end{eremark}

\begin{section}{Introduction}

For $k \ge 0$ and $x \in \CC$, the rook-Brauer algebra $\RB_k(x)$ is an associative algebra with 1 over $\CC$ with a basis given by rook-Brauer diagrams --- these are Brauer diagrams in which we may remove edges --- and a multiplication given by diagram concatenation.   The rook-Brauer algebra contains the symmetric group algebra, the rook monoid algebra \cite{So}, the Brauer algebra \cite{Br}, the Temperley-Lieb algebra \cite{TL}, the Motzkin algebra \cite{BH}, and the planar rook monoid algebra \cite{FHH} as subalgebras.

In this paper, we define an action of $\RB_k(n+1)$ on tensor space $\V^{\otimes k}$ where $\V$ has dimension $n+1$ with basis $v_0, v_1, \ldots, v_n$, and we show that this action is faithful when $n \ge k$.  We then consider $\V = \V(0) \oplus \V(1)$ where $\V(0) = \CC\text{-span}\{v_0\}$ is the trivial module for the orthogonal group $\O_n(\CC)$, and  $\V(1) = \CC\text{-span}\{ v_1, \ldots, v_n\}$ is the defining $n$-dimensional module for $\O_n(\CC)$.   Then  $\V^{\otimes k}$ is a $k$-fold tensor product module for $\O_n(\CC)$.  We prove that the actions of $\RB_k(n+1)$ and $\O_n(\CC)$ commute on $\V^{\otimes k}$ and that,
$$
\RB_k(n+1) \cong \End_{\O_n(\CC)} (\V^{\otimes k}), \qquad \hbox{ when $n \ge k$.}
$$
Thus, $\RB_k(n+1)$ and $\O_n(\CC)$ centralize one another  and are in Schur-Weyl duality on $\V^{\otimes k}$.

Using this duality, we are able to show that for generic values of $x \in \CC$, $\RB_k(x)$ is semisimple.  Double centralizer theory and the combinatorics of the irreducible $\O_n(\CC)$ modules which appear in $\V^{\otimes k}$ tell us that the irreducible $\RB_k(n+1)$ modules are indexed by integer partitions $\lambda \vdash r$ for $0 \le r \le k$.   Furthemore, dimension of these irreducible modules are given by paths in the Bratteli diagram, which are closely related to the vacillating tableaux studied in \cite{HR} and \cite{CDDSY}.

In Section 4 of this paper we explicitly construct the irreducible $\RB_k(x)$-modules for $x \in \CC$ such that $\RB_k(x)$ is semisimple.  These modules have a basis indexed by  paths in the Bratteli diagram, and we give an action of the generators on this basis.  Our action is a generalization of Young's seminormal action of the symmetric group on standard Young tableaux.  We use the analogous seminormal representations of the Brauer algebra \cite{Nz}, \cite{LR} and of the rook monoid algebra \cite{Ha} in our construction.

The rook-Brauer algebra was first introduced in \cite{GrW}, and it is also studied in \cite{Mz}. The monoid of rook-Brauer diagrams (called the partial Brauer monoid) is examined in  \cite{KM}, where it is given a presentation on generators and relations.  Representations of the rook-Brauer algebra are studied  independently (and simultaneously with this work) by Martin and Mazorchuk \cite{MM} using different methods.   Martin and Mazorchuk also  derive the Schur-Weyl duality between $\O_n(\CC)$ and $\RB_k(n)$. They define an action of the generators on tensor space and verify that the defining  relations are satisfied. Our approach is different in that we  give the action of an arbitrary rook-Brauer diagram on tensor space and use the diagram calculus to prove that it is a representation (see Proposition \ref{IsARep}).  Furthermore, Martin and Mazorchuk \cite{MM} construct  Specht modules for the (integral) rook Brauer algebra, which correspond to a complete set of irreducible $\RB_k(x)$ modules over $\CC$ in the semisimple case.  These are different from, but isomorphic to,  the seminormal representations given in Section 4 of this paper.

The Schur-Weyl duality results in this paper form the bulk of the work in the first author's 2012 honors thesis \cite{dM}. The ideas behind this   paper began long ago in conversations between the second author, Georgia Benkart, and Cheryl Grood. It was to become a paper, ``BMWISH Algebras" (Birman-Murakami-Wenzl-Iwahori-Solomon-Hecke Algebras), which was  never written.   We thank the anonymous referee for several helpful suggestions.

\end{section}

\begin{section}{The Rook Brauer Algebra $\RB_k(x)$}

\begin{subsection}{Rook-Brauer diagrams}

A \emph{rook-Brauer $k$-diagram} is a graph consisting of two rows each with $k$ vertices such that each vertex is connected to at most one other vertex by an edge. We let $\RBc_k$ denote the set of all rook-Brauer $k$-diagrams, so that for example,
$$
\begin{array}{c}
\begin{tikzpicture}[scale=.4,line width=1pt] 
\foreach \i in {1,...,10} 
{ \path (\i,1) coordinate (T\i); \path (\i,-1) coordinate (B\i); } 
\filldraw[fill= black!10,draw=black!10,line width=4pt]  (T1) -- (T10) -- (B10) -- (B1) -- (T1);
\draw[black] (T7) -- (B9);
\draw[black] (T10) -- (B8);
\draw[black] (T2) -- (B4);
\draw[black] (T1) .. controls +(.1,-.75) and +(-.1,-.75) .. (T3) ;
\draw[black] (T4) .. controls +(.1,-1.1) and +(-.1,-1.1) .. (T8) ;
\draw[black] (T5) .. controls +(.1,-.5) and +(-.1,-.5) .. (T6) ;
\draw[black] (B1) .. controls +(.1,.5) and +(-.1,.5) .. (B3) ;
\draw[black] (B7) .. controls +(.1,1.1) and +(-.1,1.1) .. (B10) ;
\draw[black] (B2) .. controls +(.1,1.1) and +(-.1,1.1) .. (B6) ;
\foreach \i in {1,...,10} 
{ \fill (T\i) circle (4pt); \fill (B\i) circle (4pt); } 
\end{tikzpicture}\end{array} \in \RBc_{10}
$$
and
 $$
\RBc_{2} = 
\left\{
\begin{array}{c}
\begin{tikzpicture}[scale=.4,line width=1pt] 
\foreach \i in {1,...,2} 
{ \path (\i,.5) coordinate (T\i); \path (\i,-.5) coordinate (B\i); } 
\filldraw[fill= black!10,draw=black!10,line width=4pt]  (T1) -- (T2) -- (B2) -- (B1) -- (T1);
\draw[black] (T1) -- (B1);
\draw[black] (T2) -- (B2);
\foreach \i in {1,...,2} 
{ \fill (T\i) circle (4pt); \fill (B\i) circle (4pt); } 
\end{tikzpicture},\ 
\begin{tikzpicture}[scale=.4,line width=1pt] 
\foreach \i in {1,...,2} 
{ \path (\i,1) coordinate (T\i); \path (\i,0) coordinate (B\i); } 
\filldraw[fill= black!10,draw=black!10,line width=4pt]  (T1) -- (T2) -- (B2) -- (B1) -- (T1);
\draw[black] (T1) -- (B2);
\draw[black] (T2) -- (B1);
\foreach \i in {1,...,2} 
{ \fill (T\i) circle (4pt); \fill (B\i) circle (4pt); } 
\end{tikzpicture},\ 
\begin{tikzpicture}[scale=.4,line width=1pt] 
\foreach \i in {1,...,2} 
{ \path (\i,1) coordinate (T\i); \path (\i,0) coordinate (B\i); } 
\filldraw[fill= black!10,draw=black!10,line width=4pt]  (T1) -- (T2) -- (B2) -- (B1) -- (T1);
\draw[black] (T1) .. controls +(.1,-.35) and +(-.1,-.35) .. (T2) ;
\draw[black] (B1) .. controls +(.1,.35) and +(-.1,.35) .. (B2) ;
\foreach \i in {1,...,2} 
{ \fill (T\i) circle (4pt); \fill (B\i) circle (4pt); } 
\end{tikzpicture},\ 
\begin{tikzpicture}[scale=.4,line width=1pt] 
\foreach \i in {1,...,2} 
{ \path (\i,1) coordinate (T\i); \path (\i,0) coordinate (B\i); } 
\filldraw[fill= black!10,draw=black!10,line width=4pt]  (T1) -- (T2) -- (B2) -- (B1) -- (T1);
\draw[black] (T1) -- (B1);
\foreach \i in {1,...,2} 
{ \fill (T\i) circle (4pt); \fill (B\i) circle (4pt); } 
\end{tikzpicture},\ 
\begin{tikzpicture}[scale=.4,line width=1pt] 
\foreach \i in {1,...,2} 
{ \path (\i,1) coordinate (T\i); \path (\i,0) coordinate (B\i); } 
\filldraw[fill= black!10,draw=black!10,line width=4pt]  (T1) -- (T2) -- (B2) -- (B1) -- (T1);
\draw[black] (T2) -- (B2);
\foreach \i in {1,...,2} 
{ \fill (T\i) circle (4pt); \fill (B\i) circle (4pt); } 
\end{tikzpicture},\ 
\begin{tikzpicture}[scale=.4,line width=1pt] 
\foreach \i in {1,...,2} 
{ \path (\i,1) coordinate (T\i); \path (\i,0) coordinate (B\i); } 
\filldraw[fill= black!10,draw=black!10,line width=4pt]  (T1) -- (T2) -- (B2) -- (B1) -- (T1);
\draw[black] (T1) -- (B2);
\foreach \i in {1,...,2} 
{ \fill (T\i) circle (4pt); \fill (B\i) circle (4pt); } 
\end{tikzpicture},\ 
\begin{tikzpicture}[scale=.4,line width=1pt] 
\foreach \i in {1,...,2} 
{ \path (\i,1) coordinate (T\i); \path (\i,0) coordinate (B\i); } 
\filldraw[fill= black!10,draw=black!10,line width=4pt]  (T1) -- (T2) -- (B2) -- (B1) -- (T1);
\draw[black] (T2) -- (B1);
\foreach \i in {1,...,2} 
{ \fill (T\i) circle (4pt); \fill (B\i) circle (4pt); } 
\end{tikzpicture},\ 
\begin{tikzpicture}[scale=.4,line width=1pt] 
\foreach \i in {1,...,2} 
{ \path (\i,1) coordinate (T\i); \path (\i,0) coordinate (B\i); } 
\filldraw[fill= black!10,draw=black!10,line width=4pt]  (T1) -- (T2) -- (B2) -- (B1) -- (T1);
\draw[black] (B1) .. controls +(.1,.35) and +(-.1,.35) .. (B2) ;
\foreach \i in {1,...,2} 
{ \fill (T\i) circle (4pt); \fill (B\i) circle (4pt); } 
\end{tikzpicture},\ 
\begin{tikzpicture}[scale=.4,line width=1pt] 
\foreach \i in {1,...,2} 
{ \path (\i,1) coordinate (T\i); \path (\i,0) coordinate (B\i); } 
\filldraw[fill= black!10,draw=black!10,line width=4pt]  (T1) -- (T2) -- (B2) -- (B1) -- (T1);
\draw[black] (T1) .. controls +(.1,-.35) and +(-.1,-.35) .. (T2) ;
\foreach \i in {1,...,2} 
{ \fill (T\i) circle (4pt); \fill (B\i) circle (4pt); } 
\end{tikzpicture},\ 
\begin{tikzpicture}[scale=.4,line width=1pt] 
\foreach \i in {1,...,2} 
{ \path (\i,1) coordinate (T\i); \path (\i,0) coordinate (B\i); } 
\filldraw[fill= black!10,draw=black!10,line width=4pt]  (T1) -- (T2) -- (B2) -- (B1) -- (T1);
\foreach \i in {1,...,2} 
{ \fill (T\i) circle (4pt); \fill (B\i) circle (4pt); } 
\end{tikzpicture}\end{array} \right\}.
$$
The rook-Brauer $k$-diagrams correspond to  the set of partial matchings of $\{1, 2, \ldots, 2 k\}$.  Rook-Brauer diagrams consist of \emph{vertical edges} which connect a vertex in the top row to a vertex in the bottom row, \emph{horizontal edges} which connect vertices in the same row, and \emph{isolated vertices}  which are not incident to an edge.  We can count the number of rook-Brauer diagrams of size $k$ according to the number $\ell$ of edges in the diagram. First choose, in $\binom{2 k} {2 \ell}$ ways, the $2 \ell$ vertices to be paired by an edge, and then connect these vertices in $(2 \ell)!! = (2 \ell -1) (2 \ell -3) \cdots 3 \cdot 1$ ways.  Thus
\begin{equation}
\vert \RBc_k \vert = \sum_{\ell = 0}^k \binom{2 k} {2 \ell} (2 \ell)!!, 
\end{equation}
so that the first few values are $1, 2, 10, 76, 764, 9496, 140152, 2390480, 46206736, \ldots$.

\end{subsection}

\begin{subsection}{The rook-Brauer algebras $\RB_k(x,y)$ and $\RB_k(x)$}

We  follow \cite{Mz} and first define a two parameter rook-Brauer algebra $\RB_k(x,y)$.  We then specialize to the one-parameter algebra  $\RB_k(x) = \RB_k(x,1)$.
Assume $\KK$ is a commutative ring with 1 and that $x$ and $y$ are elements of $\KK$.   Set  $\RB_0(x,y) = \KK 1$, and for $k \geq 1$,  let $\RB_k(x,y)$ be the free $\KK$-module with basis the rook-Brauer $k$-diagrams $\RBc_k$.  We multiply two rook-Brauer $k$-diagrams $d_1$ and $d_2$ as follows. Place $d_1$ above $d_2$ and identify the bottom-row vertices in $d_1$ with the corresponding top-row vertices in $d_2$. The product is 
\begin{equation}\label{multiplication}
d_1d_2 = x^{\kappa(d_1,d_2)} y^{\varepsilon(d_1,d_2)} d_3
\end{equation}
 where $d_3$ is the diagram consisting of the horizontal edges from the top row of $d_1$, the horizontal edges from the bottom row of $d_2$, and the vertical edges that propagate from the bottom of $d_2$ to the top of $d_1$,  $\kappa(d_1, d_2)$ is the number of loops that arise in the middle row, and $\varepsilon(d_1,d_2)$ is the number of isolated vertices that are left in the middle row. For example, if 
$$
d_1 = \begin{array}{c}
\begin{tikzpicture}[scale=.4,line width=1pt] 
\foreach \i in {1,...,11} 
{ \path (\i,1) coordinate (T\i); \path (\i,-1) coordinate (B\i); } 
\filldraw[fill= black!10,draw=black!10,line width=4pt]  (T1) -- (T11) -- (B11) -- (B1) -- (T1);
\draw[black] (T7) -- (B10);
\draw[black] (T11) -- (B8);
\draw[black] (T2) -- (B4);
\draw[black] (T1) .. controls +(.1,-.75) and +(-.1,-.75) .. (T3) ;
\draw[black] (T4) .. controls +(.1,-1.1) and +(-.1,-1.1) .. (T8) ;
\draw[black] (T5) .. controls +(.1,-.5) and +(-.1,-.5) .. (T6) ;
\draw[black] (B1) .. controls +(.1,.5) and +(-.1,.5) .. (B3) ;
\draw[black] (B5) .. controls +(.1,1.1) and +(-.1,1.1) .. (B7) ;
\draw[black] (B2) .. controls +(.1,1.1) and +(-.1,1.1) .. (B6) ;
\foreach \i in {1,...,11} 
{ \fill (T\i) circle (4pt); \fill (B\i) circle (4pt); } 
\end{tikzpicture}\end{array} \quad\hbox{ and } \quad
d_2 = \begin{array}{c}
\begin{tikzpicture}[scale=.4,line width=1pt] 
\foreach \i in {1,...,11} 
{ \path (\i,1) coordinate (T\i); \path (\i,-1) coordinate (B\i); } 
\filldraw[fill= black!10,draw=black!10,line width=4pt]  (T1) -- (T11) -- (B11) -- (B1) -- (T1);
\draw[black] (T10) -- (B5);
\draw[black] (T11) -- (B8);
\draw[black] (T1) -- (B4);
\draw[black] (T3) -- (B1);
\draw[black] (T4) -- (B3);
\draw[black] (T8) -- (B10);
\draw[black] (T2) .. controls +(.1,-.75) and +(-.1,-.75) .. (T5) ;
\draw[black] (T6) .. controls +(.1,-.5) and +(-.1,-.5) .. (T7) ;
\draw[black] (B7) .. controls +(.1,1.5) and +(-.1,1.5) .. (B11) ;
\foreach \i in {1,...,11} 
{ \fill (T\i) circle (4pt); \fill (B\i) circle (4pt); } 
\end{tikzpicture}\end{array}
$$
then
\begin{equation*}
d_1 d_2 = 
 \begin{array}{c}
\begin{tikzpicture}[scale=.4,line width=1pt] 
\foreach \i in {1,...,11} 
{ \path (\i,1) coordinate (T\i); \path (\i,-1) coordinate (B\i); } 
\filldraw[fill= black!10,draw=black!10,line width=4pt]  (T1) -- (T11) -- (B11) -- (B1) -- (T1);
\draw[black] (T7) -- (B10);
\draw[black] (T11) -- (B8);
\draw[black] (T2) -- (B4);
\draw[black] (T1) .. controls +(.1,-.75) and +(-.1,-.75) .. (T3) ;
\draw[black] (T4) .. controls +(.1,-1.1) and +(-.1,-1.1) .. (T8) ;
\draw[black] (T5) .. controls +(.1,-.5) and +(-.1,-.5) .. (T6) ;
\draw[black] (B1) .. controls +(.1,.5) and +(-.1,.5) .. (B3) ;
\draw[black] (B5) .. controls +(.1,1.1) and +(-.1,1.1) .. (B7) ;
\draw[black] (B2) .. controls +(.1,1.1) and +(-.1,1.1) .. (B6) ;
\foreach \i in {1,...,11} 
{ \fill (T\i) circle (4pt); \fill (B\i) circle (4pt); } 
\end{tikzpicture} \\
\begin{tikzpicture}[scale=.4,line width=1pt] 
\foreach \i in {1,...,10} 
{ \path (\i,1) coordinate (T\i); \path (\i,-1) coordinate (B\i); } 
\filldraw[fill= black!10,draw=black!10,line width=4pt]  (T1) -- (T11) -- (B11) -- (B1) -- (T1);
\draw[black] (T10) -- (B5);
\draw[black] (T11) -- (B8);
\draw[black] (T1) -- (B4);
\draw[black] (T3) -- (B1);
\draw[black] (T4) -- (B3);
\draw[black] (T8) -- (B10);
\draw[black] (T2) .. controls +(.1,-.75) and +(-.1,-.75) .. (T5) ;
\draw[black] (T6) .. controls +(.1,-.5) and +(-.1,-.5) .. (T7) ;
\draw[black] (B7) .. controls +(.1,1.5) and +(-.1,1.5) .. (B11) ;
\foreach \i in {1,...,11} 
{ \fill (T\i) circle (4pt); \fill (B\i) circle (4pt); } 
\end{tikzpicture}\end{array}
= x y
\begin{array}{c}
\begin{tikzpicture}[scale=.4,line width=1pt] 
\foreach \i in {1,...,11} 
{ \path (\i,1) coordinate (T\i); \path (\i,-1) coordinate (B\i); } 
\filldraw[fill= black!10,draw=black!10,line width=4pt]  (T1) -- (T11) -- (B11) -- (B1) -- (T1);
\draw[black] (T7) -- (B5);
\draw[black] (T11) -- (B10);
\draw[black] (T2) -- (B3);
\draw[black] (T1) .. controls +(.1,-.75) and +(-.1,-.75) .. (T3) ;
\draw[black] (T4) .. controls +(.1,-1.1) and +(-.1,-1.1) .. (T8) ;
\draw[black] (T5) .. controls +(.1,-.5) and +(-.1,-.5) .. (T6) ;
\draw[black] (B1) .. controls +(.1,.75) and +(-.1,.75) .. (B4) ;
\draw[black] (B7) .. controls +(.1,1.5) and +(-.1,1.5) .. (B11) ;
\foreach \i in {1,...,11} 
{ \fill (T\i) circle (4pt); \fill (B\i) circle (4pt); } 
\end{tikzpicture}\end{array}.
\end{equation*}
Diagram multiplication makes $\RB_k(x,y)$ into an associative algebra with identity element
$
\mathbf{1}_k = 
\begin{array}{c}
\begin{tikzpicture}[scale=.4,line width=1pt] 
\foreach \i in {1,...,6} 
{ \path (\i,.5) coordinate (T\i); \path (\i,-.5) coordinate (B\i); } 
\filldraw[fill= black!10,draw=black!10,line width=4pt]  (T1) -- (T6) -- (B6) -- (B1) -- (T1);
\draw[black] (T1) -- (B1);
\draw[black] (T2) -- (B2);
\draw[black] (T3) -- (B3);
\draw[black] (T5) -- (B5);
\draw[black] (T6) -- (B6);
\foreach \i in {1,...,3} 
{ \fill (T\i) circle (4pt); \fill (B\i) circle (4pt); } 
\foreach \i in {5,...,6} 
{ \fill (T\i) circle (4pt); \fill (B\i) circle (4pt); } 
\draw (T4) node {$\cdots$};
\draw (B4) node {$\cdots$};
\end{tikzpicture}\end{array}.
$

Under this multiplication, two vertical
edges can become a single horizontal edge as in the example above, 
and a vertical edge can contract to a vertex.  
The {\em rank} of a diagram $\rank(d)$ is the number of vertical edges in the diagram.  Therefore,  
$
\rank( d_1 d_2) \le \min( \rank(d_1), \rank(d_2) ).
$ 
For $0 \le r \le k$, we let $\J_r(x,y) \subseteq \RB_k(x)$ be the $\KK$-span of the rook-Brauer $k$-diagrams of rank \emph{less than or equal} to $r$.   Then $\J_r(x,y)$ is a two-sided ideal in $\RB_k(x)$ and we have the tower of ideals,
$\J_0(x,y) \subseteq \J_1(x,y) \subseteq \J_2(x,y) \subseteq \cdots \subseteq \J_k (x,y) = \RB_k(x,y).$

Under multiplication in the partition algebra $\mathsf{P}_k(x)$ defined in \cite{Ma} and \cite{Jo} isolated vertices and inner loops are treated the same.  Thus, it is possible to view the rook-Brauer algebra $\RB_k(x,y)$ as a subalgebra of the partition algebra $\mathsf{P}_k(x)$ by setting the two parameters equal to one-another; specifically, $\RB_k(x,x) \subseteq \mathsf{P}_k(x)$.  For the Schur-Weyl duality between the rook Brauer algebra and the orthogonal group in Section 3, we want to specialize the second parameter to $y = 1$.  For this reason, we consider the one-parameter rook-Brauer algebra
\begin{equation}
\RB_k(x) = \RB_k(x,1).
\end{equation}

\end{subsection}

\begin{subsection}{Subalgebras}
A number of important diagram algebras live in $\RB_k(x)$ as subalgebras (we use $\RB_k(x)$ instead of $\RB_k(x,y)$ since each subalgebra either has no isolated vertices or requires that $y =1$).  Rook-Brauer $k$-diagrams having $k$ edges are Brauer diagrams, and their $\KK$-span is the Brauer algebra $\mathsf{B}_k(x)$.  The set of diagrams of rank $k$ is the symmetric group $\mathsf{S}_k$ and their $\CC$-span is the symmetric group algebra $\KK \mathsf{S}_k$.  The set of diagrams with no horizontal edges is the rook monoid $\mathsf{R}_k$ and their span is the rook monoid algebra $\KK \mathsf{R}_k$.  Finally, a rook-Brauer diagram is \emph{planar} if can be drawn within the rectangle  formed by its edges without any edge crossings.  The planar Brauer diagrams span the Temperley-Lieb algebra $\mathsf{TL}_k(x)$, the planar rook-monoid diagrams span the planar rook monoid algebra $\mathsf{PR}_k$, and the planar rook-Brauer diagrams span the Motzkin algebra $\mathsf{M}_k(x)$.   An example of a diagram from each of these subalgebras follows:
$$
\begin{array}{l c l}
\begin{array}{c}
\begin{tikzpicture}[scale=.4,line width=1pt] 
\foreach \i in {1,...,10}  { \path (\i,1) coordinate (T\i); \path (\i,-1) coordinate (B\i); } 
\filldraw[fill= black!10,draw=black!10,line width=4pt]  (T1) -- (T10) -- (B10) -- (B1) -- (T1);
\draw[black] (T7) -- (B9);
\draw[black] (T10) -- (B8);
\draw[black] (T9) -- (B10);
\draw[black] (T2) -- (B4);
\draw[black] (T1) .. controls +(.1,-.75) and +(-.1,-.75) .. (T3) ;
\draw[black] (T4) .. controls +(.1,-1.1) and +(-.1,-1.1) .. (T8) ;
\draw[black] (T5) .. controls +(.1,-.5) and +(-.1,-.5) .. (T6) ;
\draw[black] (B1) .. controls +(.1,.5) and +(-.1,.5) .. (B3) ;
\draw[black] (B5) .. controls +(.1,1.1) and +(-.1,1.1) .. (B7) ;
\draw[black] (B2) .. controls +(.1,1.1) and +(-.1,1.1) .. (B6) ;
\foreach \i in {1,...,10}  { \fill (T\i) circle (4pt); \fill (B\i) circle (4pt); } 
\end{tikzpicture}\end{array}
 \in \mathsf{B}_{10}(x) &  & 
 \begin{array}{c}
 \begin{tikzpicture}[scale=.4,line width=1pt] 
\foreach \i in {1,...,10}  { \path (\i,1) coordinate (T\i); \path (\i,-1) coordinate (B\i); } 
\filldraw[fill= black!10,draw=black!10,line width=4pt]  (T1) -- (T10) -- (B10) -- (B1) -- (T1);
\draw[black] (T1) -- (B2);\
\draw[black] (T2) -- (B4);
\draw[black] (T3) -- (B5);
\draw[black] (T4) -- (B6);
\draw[black] (T5) -- (B1);
\draw[black] (T6) -- (B8);
\draw[black] (T7) -- (B3);
\draw[black] (T8) -- (B9);
\draw[black] (T9) -- (B10);
\draw[black] (T10) -- (B7);
\foreach \i in {1,...,10}  { \fill (T\i) circle (4pt); \fill (B\i) circle (4pt); } 
\end{tikzpicture}\end{array}
 \in \mathsf{S}_{10} 
 \\
 \begin{array}{c}
 \begin{tikzpicture}[scale=.4,line width=1pt] 
\foreach \i in {1,...,10}  { \path (\i,1) coordinate (T\i); \path (\i,-1) coordinate (B\i); } 
\filldraw[fill= black!10,draw=black!10,line width=4pt]  (T1) -- (T10) -- (B10) -- (B1) -- (T1);
\draw[black] (T8) -- (B6);
\draw[black] (T10) -- (B10);
\draw[black] (T9) -- (B7);
\draw[black] (T3) -- (B5);
\draw[black] (T1) .. controls +(.1,-.75) and +(-.1,-.75) .. (T2) ;
\draw[black] (T4) .. controls +(.1,-1.1) and +(-.1,-1.1) .. (T7) ;
\draw[black] (T5) .. controls +(.1,-.5) and +(-.1,-.5) .. (T6) ;
\draw[black] (B2) .. controls +(.1,.5) and +(-.1,.5) .. (B3) ;
\draw[black] (B8) .. controls +(.1,.75) and +(-.1,.75) .. (B9) ;
\draw[black] (B1) .. controls +(.1,1.1) and +(-.1,1.1) .. (B4) ;
\foreach \i in {1,...,10}  { \fill (T\i) circle (4pt); \fill (B\i) circle (4pt); } 
\end{tikzpicture}\end{array}
 \in \mathsf{TL}_{10}(x)
&  & 
 \begin{array}{c}
\begin{tikzpicture}[scale=.4,line width=1pt] 
\foreach \i in {1,...,10}  { \path (\i,1) coordinate (T\i); \path (\i,-1) coordinate (B\i); } 
\filldraw[fill= black!10,draw=black!10,line width=4pt]  (T1) -- (T10) -- (B10) -- (B1) -- (T1);
\draw[black] (T1) -- (B2);
\draw[black] (T3) -- (B5);
\draw[black] (T5) -- (B1);
\draw[black] (T6) -- (B8);
\draw[black] (T7) -- (B3);
\draw[black] (T8) -- (B9);
\draw[black] (T9) -- (B10);
\draw[black] (T10) -- (B7);
\foreach \i in {1,...,10}  { \fill (T\i) circle (4pt); \fill (B\i) circle (4pt); } 
\end{tikzpicture}\end{array}
 \in \mathsf{R}_{10} \\
 \begin{array}{c}
\begin{tikzpicture}[scale=.4,line width=1pt] 
\foreach \i in {1,...,10}  { \path (\i,1) coordinate (T\i); \path (\i,-1) coordinate (B\i); } 
\filldraw[fill= black!10,draw=black!10,line width=4pt]  (T1) -- (T10) -- (B10) -- (B1) -- (T1);
\draw[black] (T8) -- (B6);
\draw[black] (T10) -- (B10);
\draw[black] (T3) -- (B5);
\draw[black] (T1) .. controls +(.1,-.75) and +(-.1,-.75) .. (T2) ;
\draw[black] (T5) .. controls +(.1,-.5) and +(-.1,-.5) .. (T6) ;
\draw[black] (B2) .. controls +(.1,.5) and +(-.1,.5) .. (B3) ;
\draw[black] (B8) .. controls +(.1,.75) and +(-.1,.75) .. (B9) ;
\draw[black] (B1) .. controls +(.1,1.1) and +(-.1,1.1) .. (B4) ;
\foreach \i in {1,...,10}  { \fill (T\i) circle (4pt); \fill (B\i) circle (4pt); } 
\end{tikzpicture}\end{array}
 \in \mathsf{M}_{10}(x)  &  & 
 \begin{array}{c}
\begin{tikzpicture}[scale=.4,line width=1pt] 
\foreach \i in {1,...,10}  { \path (\i,1) coordinate (T\i); \path (\i,-1) coordinate (B\i); } 
\filldraw[fill= black!10,draw=black!10,line width=4pt]  (T1) -- (T10) -- (B10) -- (B1) -- (T1);
\draw[black] (T1) -- (B2);
\draw[black] (T3) -- (B3);
\draw[black] (T5) -- (B4);
\draw[black] (T6) -- (B5);
\draw[black] (T7) -- (B7);
\draw[black] (T8) -- (B9);
\draw[black] (T10) -- (B10);
\foreach \i in {1,...,10}  { \fill (T\i) circle (4pt); \fill (B\i) circle (4pt); } 
\end{tikzpicture}\end{array}
 \in \mathsf{PR}_{10} \\
\end{array}
$$
\end{subsection}

\begin{subsection}{Generators and relations}

For $1 \le i \le k-1$ define the following special elements of $\RB_k(x)$
$$
s_i = \begin{array}{c}
\begin{tikzpicture}[scale=.5,line width=1pt] 
\foreach \i in {1,...,9} 
{ \path (\i,1) coordinate (T\i); \path (\i,-.2) coordinate (B\i); } 
\filldraw[fill= black!10,draw=black!10,line width=4pt]  (T1) -- (T9) -- (B9) -- (B1) -- (T1);
\draw[black] (T1) -- (B1);
\draw[black] (T2) -- (B2);
\draw[black] (T4) -- (B4);
\draw[black] (T5) -- (B6);
\draw[black] (T6) -- (B5);
\draw[black] (T7) -- (B7);
\draw[black] (T9) -- (B9);
\foreach \i in {1,...,2}  { \fill (T\i) circle (4pt); \fill (B\i) circle (4pt); } 
\foreach \i in {4,...,7}  { \fill (T\i) circle (4pt); \fill (B\i) circle (4pt); } 
{ \fill (T9) circle (4pt); \fill (B9) circle (4pt); } 
\draw (T3) node {$\cdots$};\draw (B3) node {$\cdots$};
\draw (T8) node {$\cdots$};\draw (B8) node {$\cdots$};
\draw  (T1)  node[above=0.1cm]{$\scriptstyle{1}$};
\draw  (T2)  node[above=0.1cm]{$\scriptstyle{2}$};
\draw  (T4)  node[above=0.1cm]{$\scriptstyle{i-1}$};
\draw  (T5)  node[above=0.1cm]{$\scriptstyle{i}$};
\draw  (T6)  node[above=0.1cm]{$\scriptstyle{i+1}$};
\draw  (T7)  node[above=0.1cm]{$\scriptstyle{i+2}$};
\draw  (T9)  node[above=0.1cm]{$\scriptstyle{k}$};
\end{tikzpicture}
\end{array}
\quad\hbox{and}\quad
t_i = \begin{array}{c}
\begin{tikzpicture}[scale=.5,line width=1pt] 
\foreach \i in {1,...,9} 
{ \path (\i,1) coordinate (T\i); \path (\i,-.2) coordinate (B\i); } 
\filldraw[fill= black!10,draw=black!10,line width=4pt]  (T1) -- (T9) -- (B9) -- (B1) -- (T1);
\draw[black] (T1) -- (B1);
\draw[black] (T2) -- (B2);
\draw[black] (T4) -- (B4);
\draw[black] (T5) .. controls +(.1,-.5) and +(-.1,-.5) .. (T6) ;
\draw[black] (B5) .. controls +(.1,.5) and +(-.1,.5) .. (B6) ;
\draw[black] (T7) -- (B7);
\draw[black] (T9) -- (B9);
\foreach \i in {1,...,2}  { \fill (T\i) circle (4pt); \fill (B\i) circle (4pt); } 
\foreach \i in {4,...,7}  { \fill (T\i) circle (4pt); \fill (B\i) circle (4pt); } 
{ \fill (T9) circle (4pt); \fill (B9) circle (4pt); } 
\draw (T3) node {$\cdots$};\draw (B3) node {$\cdots$};
\draw (T8) node {$\cdots$};\draw (B8) node {$\cdots$};
\draw  (T1)  node[above=0.1cm]{$\scriptstyle{1}$};
\draw  (T2)  node[above=0.1cm]{$\scriptstyle{2}$};
\draw  (T4)  node[above=0.1cm]{$\scriptstyle{i-1}$};
\draw  (T5)  node[above=0.1cm]{$\scriptstyle{i}$};
\draw  (T6)  node[above=0.1cm]{$\scriptstyle{i+1}$};
\draw  (T7)  node[above=0.1cm]{$\scriptstyle{i+2}$};
\draw  (T9)  node[above=0.1cm]{$\scriptstyle{k}$};
\end{tikzpicture}
\end{array}
$$
and for $1 \le i \le k$ define
$$
p_i =\begin{array}{c}
\begin{tikzpicture}[scale=.5,line width=1pt] 
\foreach \i in {1,...,9} 
{ \path (\i,1) coordinate (T\i); \path (\i,-.2) coordinate (B\i); } 
\filldraw[fill= black!10,draw=black!10,line width=4pt]  (T1) -- (T9) -- (B9) -- (B1) -- (T1);
\draw[black] (T1) -- (B1);
\draw[black] (T2) -- (B2);
\draw[black] (T4) -- (B4);
\draw[black] (T6) -- (B6);
\draw[black] (T7) -- (B7);
\draw[black] (T9) -- (B9);
\foreach \i in {1,...,2}  { \fill (T\i) circle (4pt); \fill (B\i) circle (4pt); } 
\foreach \i in {4,...,7}  { \fill (T\i) circle (4pt); \fill (B\i) circle (4pt); } 
{ \fill (T9) circle (4pt); \fill (B9) circle (4pt); } 
\draw (T3) node {$\cdots$};\draw (B3) node {$\cdots$};
\draw (T8) node {$\cdots$};\draw (B8) node {$\cdots$};
\draw  (T1)  node[above=0.1cm]{$\scriptstyle{1}$};
\draw  (T2)  node[above=0.1cm]{$\scriptstyle{2}$};
\draw  (T4)  node[above=0.1cm]{$\scriptstyle{i-1}$};
\draw  (T5)  node[above=0.1cm]{$\scriptstyle{i}$};
\draw  (T6)  node[above=0.1cm]{$\scriptstyle{i+1}$};
\draw  (T7)  node[above=0.1cm]{$\scriptstyle{i+2}$};
\draw  (T9)  node[above=0.1cm]{$\scriptstyle{k}$};
\end{tikzpicture}
\end{array}
$$
It is easy to verify that these diagrams generate all of $\RB_k(x)$.  
In fact, $\RB_k(x)$ can be generated by $p_1, t_1$ and $s_1, \ldots, s_{n-1}$.  Furthermore,  our 
subalgebras are generated as follows,
$$
\begin{array}{lcl}
\mathsf{B}_k(x) = \langle s_1, \ldots, s_{k-1}, t_1, \ldots, t_{k-1} \rangle  & &  
\KK \mathsf{R}_k =  \langle p_1, \ldots, p_k, s_1, \ldots, s_{k-1} \rangle \\
\KK \mathsf{S}_k = \langle s_1, \ldots, s_{k-1} \rangle & & 
\KK \mathsf{PR}_k =  \langle p_1, \ldots, p_k, \ell_1, \ldots, \ell_{k-1},r_1, \ldots, r_{k-1}  \rangle   \\
\mathsf{TL}_k(x) = \langle t_1, \ldots, t_{k-1} \rangle & &
\mathsf{M}_k(x) = \langle t_1, \ldots, t_{k-1}, \ell_1, \ldots, \ell_{k-1}, r_1, \ldots, r_{k-1} \rangle \\
\end{array}
$$
where, for each $1 \le i \le k-1$, we let $\ell_i = s_i p_i$ and $r_i = p_i s_i$.

A presentation of $\RB_k(x)$ is given in \cite{KM} which we describe here.  The symmetric group $\mathsf{S}_k\subseteq \RB_k(x)$ is generated by $s_i$ for $1\leq i \leq k-1$ subject to the following relations:
\begin{equation}\label{Snrels}
\begin{array}{lll}
\text{(a)} & s_i^2 =1, & 1 \le i \le k-1, \\
\text{(b)} & s_is_j = s_js_i, & |i-j| > 1,\\
\text{(c)} & s_i s_{i+1} s_i = s_{i+1} s_i s_{i+1}, & 1 \le i \le k-2.
\end{array}\hskip.4truein
\end{equation}
The Brauer algebra $\mathsf{B}_k(x) \subseteq \RB_k(x)$ contains $\mathsf{S}_k$ and is generated by $s_i$ and $t_i$ subject to relations \eqref{Snrels} and the following:
\begin{equation}\label{Bnrels}
\begin{array}{lll}
\text{(a)} & t_i^2 = (x+1) t_i, &1 \le i \le k-1, \\
\text{(b)} &  t_i t_j = t_jt_i \hbox{ and } t_i s_j = s_jt_i,   & |i-j| >1,\\
\text{(c)} &  t_i s_i = s_i t_i = t_i, & 1 \le i \le k-1, \\
\text{(d)} & t_it_{i\pm1} t_i = t_i, & 1 \le i \le k-1,\\
\text{(e)} & s_it_{i\pm1}t_i = s_{i\pm1}t_i \hbox{ and }  t_i t_{i\pm1} s_i = t_is_{i\pm1},& 1 \le i \le k-1, \\
\end{array}\hskip.5truein
\end{equation}
The rook monoid $\mathsf{R}_k \subseteq \RB_k(x)$ is generated by $s_i$ and $p_i$ subject to relations  \eqref{Snrels}  and the following relations:
\begin{equation}\label{Rnrels}
\begin{array}{lll}
\text{(a)} &p_i^2 = p_i, & 1 \le i \le k, \\
\text{(b)} &p_ip_j = p_jp_i, & i\neq j,\\
\text{(c)} &s_ip_i = p_{i+1}s_i, \quad & 1 \le i \le k-1,\\
\text{(d)} &s_ip_j = p_js_i, &  |i-j| > 1,\\
\text{(e)} &p_is_ip_i = p_ip_{i+1}, & 1 \le i \le k-1.
\end{array}\hskip.7truein
\end{equation}
Finally, the rook-Brauer algebra $\RB_k(x)$ is generated by $s_i, t_i,$ and $p_i$ subject to relations \eqref{Snrels}, \eqref{Bnrels}, and \eqref{Rnrels} along with the following relations:
\begin{equation}\label{RBnrels}
\begin{array}{lll}
\text{(a)} &t_ip_j = p_jt_i, & |i-j| > 1;\\
\text{(b)} &t_ip_i = t_i p_{i+1} = t_ip_ip_{i+1}, & 1 \le i \le k-1, \\
\text{(c)} &p_it_i = p_{i+1}t_i = p_ip_{i+1}t_i, \quad &  1 \le i \le k-1, \\
\text{(d)} &t_ip_it_i = t_i, &  1 \le i \le k-1, \\
\text{(e)} &p_it_ip_i = p_ip_{i+1}, &  1 \le i \le k-1.
\end{array}
\end{equation}
\end{subsection}
\end{section}

\begin{section}{Schur-Weyl Duality}

\begin{subsection}{Tensor powers of orthogonal group modules}

Let $\GL_n(\CC)$ denote the general linear group of $n \times n$ invertible matrices with entries from the complex numbers $\CC$, and let $\O_n(\CC) = \{\ g \in \GL_n(\CC)\ \vert\ g g^t =  I\ \}$ be the subgroup of orthogonal matrices, where $t$ denotes matrix transpose and $I$ is the $n \times n$ identity matrix.  We refer to \cite{GW} or \cite{LR} for standard results on the representation theory of the orthogonal group.
Let $\V(0)= \CC$ be the 1-dimensional trivial $\GL_n(\CC)$ module and let $\V(1)=\CC^n$ be the defining $n$-dimensional $\GL_n(\CC)$ module on which  $\GL_n(\CC)$ acts as $n \times n$ matrices. Specifically, let $v_0$ be a basis for $\V(0)$ and let $v_1, \ldots, v_n$ be a basis for $\V(1)$, then for $g \in \GL_n(\CC)$ we have $g v_0 = v_0$ and $g v_j = \sum_{j=1 }^n g_{ij} v_i,$ where $g_{ij}$ is the $ij$-entry of $g$.

Then  $\V = \V(0) \oplus \V(1)$ is an $(n+1)$-dimensional $\GL_n(\CC)$ module with basis $v_0, v_1, \ldots, v_n$. Consider the $k$-fold tensor product module
\begin{equation}
\V^{\otimes k} = \CC\hbox{-span}\left\{\ v_{i_1} \ot \cdots  \ot v_{i_k} \ \vert \ i_j \in \{0, \ldots, n\} \ \right\},
\end{equation}
which has dimension $(n+1)^k$ and a basis consisting of simple tensors of the form $v_{i_1} \ot \cdots  \ot v_{i_k}$.
An element $g \in \GL_n(\CC)$ acts on a simple tensor by the diagonal action
\begin{equation}
g(v_{i_1} \ot \cdots  \ot v_{i_k}) = (g v_{i_1}) \ot \cdots  \ot (gv_{i_k}),
\end{equation}
which extends linearly to make $\V^{\otimes k}$ a $\GL_n(\CC)$ module. 

The irreducible polynomial representations of $\O_n(\CC)$ are of the form $\V^{\lambda}$ where $\lambda = (\lambda_1, \lambda_2, \ldots, \lambda_\ell)$ is a partition with $\lambda_1' + \lambda_2' \le n$.  In this notation $\V(0) = \V^\emptyset$ and $\V(1) = \V^{(1)} = \V^\square$.  The Clebsch-Gordon formulas give the following tensor product formulas,
\begin{equation}
\V^\lambda \otimes \V^\emptyset = \V^\lambda \qquad\hbox{and}\qquad
\V^\lambda \otimes \V^{(1)} = \bigoplus_{\mu = \lambda \pm \square} \V^\mu,\\
\end{equation}
and thus,
\begin{equation}\label{eq:TensorProductRule}
\V^\lambda \otimes \V = \bigoplus_{\mu} \V^\mu,
\end{equation}
where the sum is over all partitions $\mu$ such that either $\mu = \lambda$ or $\mu = \lambda \pm \square$.  By convention, we define $\V^{\otimes 0} = \V^\emptyset$.   

Let  $\lambda \vdash r$ denote the fact that $\lambda$ is an integer partition of $r$, and for $k \ge 0$, define
\begin{equation}
\Lambda_k =  \left\{ \  \lambda \vdash r \ \vert\ 0 \le r \le k \ \right\}.
\end{equation}
Define the \emph{Bratteli diagram} $\mathcal{B}$ of $\O_n(\CC)$ acting on $\V^{\otimes k}$ to be the infinite rooted lattice with vertices on level $k$ labeled by the partitions in $\Lambda_k$ and an edge between $\mu \in \Lambda_{k-1}$ and  $\lambda \in \Lambda_k$ if and only if one of the following three conditions hold:
$\lambda = \mu, \lambda = \mu + \square,$ or
$\lambda = \mu - \square.$
Thus the first 4 rows of $\mathcal{B}$ are given by
$$
\begin{tikzpicture}[line width=1pt,scale=0.025]
  
\def\partA{
\node{$\emptyset$};
       }
    
\def\partB{
\node{};
 \draw[clip,scale=0.2] (-1,0) -| (0,0) -| (0,-1) -| (-1,-1) -| (-1,0); 
    \draw[scale=0.2] (-5,-5) grid (5,5);
    }
    
\def\partC{
\node{};
    \draw[clip,scale=0.2] (-1,0) -| (1,0) -| (1,-1) -| (-1,-1) -|  (-1,0); 
    \draw[scale=0.2] (-5,-5) grid (5,5);
    }
    
\def\partD{
\node{};
    \draw[clip,scale=0.2] (-1,0) -| (0,0) -| (0,-2) -| (-1,-2) -|  (-1,0); 
    \draw[scale=0.2] (-5,-5) grid (5,5);
    }
    
\def\partE{
\node{};
    \draw[clip,scale=0.2] (-1,0) -| (1,0) -| (1,-1) -| (0,-1) -|  (0,-2) -| (-1,-2) -| (-1,0); 
    \draw[scale=0.2] (-5,-5) grid (5,5);
    }
    
\def\partF{
\node{};
	\draw[clip,scale=.2] (-1,0) -| (2,0) -| (2,-1) -| (-1,-1) -| (-1,0);
	\draw[scale=.2] (-5,-5) grid (5,5);
}

\def\partG{
\node{};
	\draw[clip,scale=.2] (-1,1) -| (0,1) -| (0,-2) -| (-1,-2) -| (-1,1);
	\draw[scale=.2] (-5,-5) grid (5,5);
}
\def\partH{
\node{};
    \draw[clip,scale=0.2] (-1,0) -| (1,0) -| (1,-1)-| (0,-1) -|  (0,-3) -| (-1,-3) -| (-1,0); 
    \draw[scale=0.2] (-5,-5) grid (5,5);
}
    
\def\partI{
\node{};
    \draw[clip,scale=0.2] (-1,0) -| (1,0) -| (1,-2) -| (-1,-2) -|   (-1,0); 
    \draw[scale=0.2] (-5,-5) grid (5,5);
}
    
 \def\partJ{
 \node{};
 	\draw[clip,scale=0.2] (-1,0) -| (2,0) -| (2,-1) -| (0,-1) -| (0,-2) -| (-1,-2) -| (-1,0);
	\draw[scale=0.2] (-5,-5) grid (5,5);
 }

\def\partK{
\node{};
	\draw[clip,scale=0.2] (-1,0) -| (3,0) -| (3,-1) -| (-1,-1) -| (-1,0);
	\draw[scale=.2] (-5,-5) grid (5,5);
}

\def\partL{
\node{};
	\draw[clip,scale=0.2] (-1,1) -| (0,1) -| (0,-3) -| (-1,-3) -| (-1,1);
	\draw[scale=.2] (-5,-5) grid (5,5);

}
    
\matrix (m) [matrix of nodes, column sep=.25cm,row sep = 1.5 cm,ampersand replacement=\&,
nodes={draw=white, 
      line width=1pt,
      anchor=center, 
       rounded corners,
      minimum width=.8cm, minimum height=.8cm
    }]    
 { 
\node{}; \& \partA  \& \node{};  \&\node{};\&\node{};\&\node{};\&\node{};\&\node{};\&\node{};\& \node{};\& \node{};\& \node{};\& \node{};\& \node{};\\
\node{}; \& \partA  \& \partB \& \node{}; \&\node{}; \&\node{};\&\node{};\&\node{};\&\node{};\& \node{};\& \node{};\& \node{};\& \node{};\& \node{};\\ 
\node{}; \& \partA  \& \partB \& \partC \& \partD\&\node{};\&\node{};\&\node{};\&\node{};\& \node{};\& \node{};\& \node{};\& \node{};\& \node{};\\ 
\node{}; \& \partA  \& \partB \& \partC \& \partD \& \partE \& \partF \& \partG \&\node{};\& \node{};\& \node{};\& \node{};\& \node{};\& \node{};\\
\node{}; \& \partA  \& \partB \& \partC \& \partD \& \partE \& \partF \& \partG \&\partH \& \partJ \& \partK \& \partI \& \partL\& \node{};\\
 };
 
 \draw (m-1-1) node {$\V^{\otimes 0}$};
\path (m-1-2) edge[black!50] (m-2-2);
\path (m-1-2) edge[black!50] (m-2-3);

\draw (m-2-1) node {$\V^{\otimes 1}$};

\path (m-2-2) edge[black!50] (m-3-2);
\path (m-2-2) edge[black!50] (m-3-3);
\path (m-2-3) edge[black!50] (m-3-3);
\path (m-2-3) edge[black!50] (m-3-2);
\path (m-2-3) edge[black!50] (m-3-4);
\path (m-2-3) edge[black!50] (m-3-5);

\draw (m-3-1) node {$\V^{\otimes 2}$};

\path (m-3-2) edge[black!50] (m-4-2);
\path (m-3-2) edge[black!50] (m-4-3);
\path (m-3-3) edge[black!50] (m-4-3);
\path (m-3-3) edge[black!50] (m-4-2);
\path (m-3-3) edge[black!50] (m-4-4);
\path (m-3-3) edge[black!50] (m-4-5);
\path (m-3-4) edge[black!50] (m-4-3);
\path (m-3-4) edge[black!50] (m-4-4);
\path (m-3-4) edge[black!50] (m-4-7);
\path (m-3-4) edge[black!50] (m-4-6);
\path (m-3-5) edge[black!50] (m-4-3);
\path (m-3-5) edge[black!50] (m-4-8);
\path (m-3-5) edge[black!50] (m-4-5);
\path (m-3-5) edge[black!50] (m-4-6);

\draw (m-4-1) node {$\V^{\otimes 3}$};

\path (m-4-2) edge[black!50] (m-5-2);
\path (m-4-2) edge[black!50] (m-5-3);

\path (m-4-3) edge[black!50] (m-5-3);
\path (m-4-3) edge[black!50] (m-5-2);
\path (m-4-3) edge[black!50] (m-5-4);
\path (m-4-3) edge[black!50] (m-5-5);

\path (m-4-4) edge[black!50] (m-5-3);
\path (m-4-4) edge[black!50] (m-5-4);
\path (m-4-4) edge[black!50] (m-5-7);
\path (m-4-4) edge[black!50] (m-5-6);

\path (m-4-5) edge[black!50] (m-5-3);
\path (m-4-5) edge[black!50] (m-5-8);
\path (m-4-5) edge[black!50] (m-5-5);
\path (m-4-5) edge[black!50] (m-5-6);

\path (m-4-6) edge[black!50] (m-5-4);
\path (m-4-6) edge[black!50] (m-5-5);
\path (m-4-6) edge[black!50] (m-5-6);
\path (m-4-6) edge[black!50] (m-5-9);
\path (m-4-6) edge[black!50] (m-5-10);
\path (m-4-6) edge[black!50] (m-5-12);

\path (m-4-7) edge[black!50] (m-5-4);
\path (m-4-7) edge[black!50] (m-5-7);
\path (m-4-7) edge[black!50] (m-5-10);
\path (m-4-7) edge[black!50] (m-5-11);

\path (m-4-8) edge[black!50] (m-5-5);
\path (m-4-8) edge[black!50] (m-5-8);
\path (m-4-8) edge[black!50] (m-5-9);
\path (m-4-8) edge[black!50] (m-5-13);

\draw (m-5-1) node {$\V^{\otimes 4}$};

\draw (250,235) node[black] {\underline{sum of squares}};
\draw (m-1-2) node[below right=.1cm, red] {$\scriptstyle\bf 1$};

\draw (m-2-2) node[below right=.1cm, red] {$\scriptstyle\bf 1$};
\draw (m-2-3) node[below right=.1cm, red] {$\scriptstyle\bf 1$};

\draw (m-3-2) node[below right=.1cm, red] {$\scriptstyle\bf 2$};
\draw (m-3-3) node[below right=.1cm, red] {$\scriptstyle\bf 2$};
\draw (m-3-4) node[below right=.1cm, red] {$\scriptstyle\bf 1$};
\draw (m-3-5) node[below right=.1cm, red] {$\scriptstyle\bf 1$};

\draw (m-4-2) node[below right=.1cm, red] {$\scriptstyle\bf 4$};
\draw (m-4-3) node[below right=.1cm, red] {$\scriptstyle\bf 6$};
\draw (m-4-4) node[below right=.1cm, red] {$\scriptstyle\bf 3$};
\draw (m-4-5) node[below right=.1cm, red] {$\scriptstyle\bf 3$};
\draw (m-4-6) node[below right=.1cm, red] {$\scriptstyle\bf 2$};
\draw (m-4-7) node[below right=.1cm, red] {$\scriptstyle\bf 1$};
\draw (m-4-8) node[below right=.1cm, red] {$\scriptstyle\bf 1$};

\draw (m-5-1) node[below = .1cm] {$\mathbf{\vdots}$};
\draw (m-5-2) node[below right=.1cm, red] {$\scriptstyle\bf 10$};
\draw (m-5-3) node[below right=.1cm, red] {$\scriptstyle\bf 16$};
\draw (m-5-4) node[below right=.1cm, red] {$\scriptstyle\bf 12$};
\draw (m-5-5) node[below right=.1cm, red] {$\scriptstyle\bf 12$};
\draw (m-5-6) node[below right=.1cm, red] {$\scriptstyle\bf 8$};
\draw (m-5-7) node[below right=.1cm, red] {$\scriptstyle\bf 4$};
\draw (m-5-8) node[below right=.1cm, red] {$\scriptstyle\bf 4$};
\draw (m-5-9) node[below right=.1cm, red] {$\scriptstyle\bf 3$};
\draw (m-5-10) node[below right=.1cm, red] {$\scriptstyle\bf 3$};
\draw (m-5-11) node[below right=.1cm, red] {$\scriptstyle\bf 1$};
\draw (m-5-12) node[below right=.1cm, red] {$\scriptstyle\bf 2$};
\draw (m-5-13) node[below right=.1cm, red] {$\scriptstyle\bf 1$};

\draw (m-1-14) node[black] {1};
\draw (m-2-14) node[black] {2};
\draw (m-3-14) node[black] {10};
\draw (m-4-14) node[black] {76};
\draw (m-5-14) node[black] {764};
 
 \end{tikzpicture}
$$
By this construction, we see that $\Lambda_k$ indexes the irreducible $\O_n(\CC)$ modules which appear in $\V^{\otimes k}$. Furthermore, if we let
 $m_k^\lambda$ denote the multiplicity of $\V^\lambda$ in $\V^{\otimes k}$, then  by induction on the tensor product rule \eqref{eq:TensorProductRule}, we see that
\begin{equation}\label{eq:DimensionPaths}
m_k^\lambda = (\hbox{the number of paths of length $k$   in $\mathcal{B}$ from $\emptyset \in \Lambda_0$ to $\lambda \in \Lambda_k$}).
\end{equation} 
\end{subsection}

\begin{subsection}{Centralizer of $\O_n(\CC)$ on $\V^{\otimes k}$}
 
Let $\C_k = \End_{\O_n(\CC)}(\VV^{\otimes k})$ be the centralizer of $\O_n(\CC)$ acting on $\VV^{\otimes k}$, so that  
\begin{equation}
\C_k = \left\{\  \phi \in \End(\VV^{\otimes k}) \  \big\vert \   \phi (y w) = y\phi(w) \hbox{ for all $y \in \O_n(\CC)$, $w \in \VV^{\otimes k}$} \ \right\}.
\end{equation}
Then by the classical double-centralizer theory (see for example \cite{HR} or \cite[Secs.\ 3B and 68]{CR}), we know the
following.  Let $m_{k,\lambda} = \mult_{\V^{\otimes k}}(\V^\lambda)$ denote the multiplicity of $\V^\lambda$ in $\V^{\otimes k}$. Let $\dim(\V^\lambda) = P_\lambda(n)$ be the El-Samra-King polynomials (see Section 4). We have,
\begin{itemize}
\item $\C_k$ is a semisimple associative $\CC$-algebra with irreducible representations labeled by $\Lambda_k$. We let
$
\left\{\  \M_k^{\lambda}  \ \vert \ \lambda \vdash r, \ 0 \le r \le k \right\}
$
denote the set of irreducible $\C_k$-modules.

\item $\dim(\M_k^\lambda) = m_k^\lambda$.

\item We can naturally embed the algebras  $\C_0 \subseteq \C_1 \subseteq \C_2 \cdots $,  and the edges from level $k+1$
to level $k$ in $\mathcal{B}$ represent the restriction rule for $\C_k \subseteq \C_{k+1}$.  Therefore, $\mathcal{B}$ is
the Bratteli diagram for the tower of semisimple algebras $\C_k$. 

\item The tensor space $\VV^{\otimes k}$ decomposes as \begin{equation}
\begin{array}{rll}
\VV^{\otimes k} & \cong \displaystyle{\bigoplus_{r = 0}^k \bigoplus_{\lambda\vdash r}}\,\, m_k^\lambda\, \V^\lambda & \hbox{ as an $\O_n(\CC)$-module}, \\
& \cong \displaystyle{\bigoplus_{r = 0}^k\bigoplus_{\lambda\vdash r}}\,  P_\lambda(n) \M_k^\lambda & \hbox{ as a $\C_k$-module}, \\
& \cong \displaystyle{\bigoplus_{r = 0}^k\bigoplus_{\lambda\vdash r}}\left(\V^\lambda \otimes \M_k^\lambda \right) & \hbox{ as an $(\O_n(\CC),\C_k)$-bimodule}. \\
\end{array}
\end{equation}
\noindent Note that it follows from these expressions that 
\begin{equation} \label{eq:bimoduleidentity}
(n+1)^k = \sum_{r = 0}^k \sum_{\lambda\vdash r} P_\lambda(n) m_k^\lambda.
\end{equation}

\item By general Wedderburn theory, the dimension of $\C_k$ is the sum of the squares of the dimensions of its irreducible modules,  
\begin{equation}\label{eq:evid}
\dim(\C_k) = \sum_{r = 0}^k \sum_{\lambda\vdash r} (m_k^\lambda)^2.
\end{equation}
\end{itemize}

\end{subsection}

\begin{subsection}{Action of $\RB_k(n+1)$ on tensor space}

Let $\KK=\CC$.
For $d$ in the set $\RBc_{k}$ of  rook-Brauer $k$-diagrams, we define an action of $d$ on the basis of simple tensors
in $\VV^{\otimes k}$  by
\begin{equation}\label{ActionOnTensorSpace}
d (v_{i_1} \otimes \cdots \otimes v_{i_k}) = \sum_{j_1, \ldots, j_k}  (d)_{i_1, \ldots, i_k}^{j_1, \ldots, j_k}\  v_{j_1} \otimes \cdots \otimes v_{j_k},
\end{equation}
where $(d)_{i_1, \ldots, i_k}^{j_1, \ldots, j_k}$ is computed by labeling the vertices in the bottom row of $d$ with ${i_1}, \ldots, {i_k}$ and the vertices in the top row of $d$ with ${j_1}, \ldots, {j_k}$.  Then
$$
(d)_{i_1, \ldots, i_k}^{j_1, \ldots, j_k} = \prod_{\varepsilon \in d} (\varepsilon)_{i_1, \ldots, i_k}^{j_1, \ldots, j_k},
$$
such that  the product is over the weights of all connected components $\varepsilon$ (edges and isolated vertices) in the diagram $d$, where by the weight of $\varepsilon$ we mean  
$$
(\varepsilon)_{i_1, \ldots, i_k}^{j_1, \ldots, j_k} = \begin{cases}
\delta_{a,0}, & \text{ if $\varepsilon$ is an isolated vertex labeled by $a$,} \\
\delta_{a,b}, &  \text{ if $\varepsilon$ is an edge in $d$ connecting $a$ and $b$,} \\
\end{cases}
$$
and $\delta_{a,b}$ is the Kronecker delta.
 For  example, for this labeled diagram
 $$
d = \begin{array}{c}\begin{tikzpicture}[scale=.5,line width=1pt] 
\foreach \i in {1,...,10} 
{ \path (\i,.9) coordinate (T\i); \path (\i,-.9) coordinate (B\i); } 
\filldraw[fill= black!10,draw=black!10,line width=4pt]  (T1) -- (T10) -- (B10) -- (B1) -- (T1);
\draw[black] (T7) -- (B9);
\draw[black] (T10) -- (B8);
\draw[black] (T2) -- (B4);
\draw[black] (T1) .. controls +(.1,-.75) and +(-.1,-.75) .. (T3) ;
\draw[black] (T4) .. controls +(.1,-1.1) and +(-.1,-1.1) .. (T8) ;
\draw[black] (T5) .. controls +(.1,-.5) and +(-.1,-.5) .. (T6) ;
\draw[black] (B1) .. controls +(.1,.5) and +(-.1,.5) .. (B3) ;
\draw[black] (B7) .. controls +(.1,1.1) and +(-.1,1.1) .. (B10) ;
\draw[black] (B2) .. controls +(.1,1.1) and +(-.1,1.1) .. (B6) ;
\foreach \i in {1,...,10} 
{ \fill (T\i) circle (4pt); \fill (B\i) circle (4pt); } 
\draw  (T1)  node[above=0.1cm]{$j_1$};\draw  (B1)  node[below=0.1cm]{$i_1$};
\draw  (T2)  node[above=0.1cm]{$j_2$};\draw  (B2)  node[below=0.1cm]{$i_2$};
\draw  (T3)  node[above=0.1cm]{$j_3$};\draw  (B3)  node[below=0.1cm]{$i_3$};
\draw  (T4)  node[above=0.1cm]{$j_4$};\draw  (B4)  node[below=0.1cm]{$i_4$};
\draw  (T5)  node[above=0.1cm]{$j_5$};\draw  (B5)  node[below=0.1cm]{$i_5$};
\draw  (T6)  node[above=0.1cm]{$j_6$};\draw  (B6)  node[below=0.1cm]{$i_6$};
\draw  (T7)  node[above=0.1cm]{$j_7$};\draw  (B7)  node[below=0.1cm]{$i_7$};
\draw  (T8)  node[above=0.1cm]{$j_8$};\draw  (B8)  node[below=0.1cm]{$i_8$};
\draw  (T9)  node[above=0.1cm]{$j_9$};\draw  (B9)  node[below=0.1cm]{$i_9$};
\draw  (T10)  node[above=0.1cm]{$j_{10}$};\draw  (B10)  node[below=0.1cm]{$i_{10}$};
\end{tikzpicture}\end{array}
$$
we have 
$$
d_{i_1, \ldots, i_k}^{j_1, \ldots, j_k}= \delta_{j_1,j_3} \delta_{j_2,i_4}  \delta_{j_4,j_8}  \delta_{j_5,j_6}
 \delta_{j_7,i_9}  \delta_{j_9,0} \delta_{j_{10},i_8} 
 \delta_{i_1,i_3}  \delta_{i_2,i_6} \delta_{i_5,0} \delta_{i_7,i_{10}}.
$$
The representing transformation of $d$ is obtained by extending this action linearly to $\VV^{\otimes k}$.

Define $s, t: \V\otimes \V \to \V \otimes \V$ and $p: \V\to \V$  on a simple tensors  by
 \begin{equation}\label{eq:stpdef}
 \begin{array}{rcl}
s \cdot (v_a \otimes v_b) &=& v_b \otimes v_a,  \\
t \cdot (v_a \otimes v_b) &=& \displaystyle{\delta_{a,b} \sum_{k = 0}^n v_k \otimes v_k,} \\
p\cdot v_a &=& \delta_{a,0}\, v_0,
\end{array}
\end{equation}
Then the representing action of $s_i, t_i,$ and $p_i$ under \eqref{ActionOnTensorSpace} is given by  
 \begin{equation}\label{eq:Ract}
\begin{array}{lllclll}
s_i &\mapsto& \id_\VV^{\otimes i-1} \otimes s \otimes \id_\VV^{\otimes k - (i+1)}, \qquad &1 \le i < k, \\ \\
t_i  &\mapsto& \id_\VV^{\otimes i-1} \otimes t \otimes \id_\VV^{\otimes k - (i+1)},\qquad &1 \le i < k, \\ \\
p_i &\mapsto& \id_\VV^{\otimes i-1} \otimes p \otimes \id_\VV^{\otimes k - i},\qquad &1 \le i \le k,
\end{array}
\end{equation}
respectively, where $\id_\V: \V \to \V$ is the identity map. The action of the symmetric group generators $s_i$ 
 by tensor place permutation is the same as in classical Schur-Weyl duality. The action of $t_i$ is the same
 as defined by R.\ Brauer \cite{Br} for the Brauer algebra, and the action of the $p_i$ is the same as defined
 by Solomon \cite{So} for the rook-monoid.

\begin{prop}\label{IsARep} The map $\pi_k: \RB_k(n+1) \rightarrow \End(\VV^{\otimes k})$  afforded by the action  \eqref{ActionOnTensorSpace} of the basis diagrams is an algebra representation.
\end{prop}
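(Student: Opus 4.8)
The plan is to prove the statement at the level of matrix entries in the simple-tensor basis, using the diagram calculus of \eqref{ActionOnTensorSpace} rather than checking the generator relations \eqref{Snrels}--\eqref{RBnrels} (the latter being the route taken in \cite{MM}). Since $\pi_k$ is already a well-defined linear map, defined on the basis $\RBc_k$ and extended linearly, and since the product in $\RB_k(n+1)$ is governed by diagram concatenation via \eqref{multiplication} with $x=n+1$ and $y=1$, it suffices to check two things: that the identity diagram $\mathbf 1_k$ acts as $\id_{\VV^{\otimes k}}$, and that $\pi_k(d_1)\pi_k(d_2)=\pi_k(d_1 d_2)$ for every pair of basis diagrams $d_1,d_2\in\RBc_k$. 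The first is immediate, since $\mathbf 1_k$ consists of the vertical edges $\ell\to\ell$, so its weight is $\prod_\ell \delta_{j_\ell,i_\ell}$, the identity matrix.

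For multiplicativity I would fix bottom indices $\mathbf i=(i_1,\dots,i_k)$ and top indices $\mathbf j=(j_1,\dots,j_k)$ and compare the $(\mathbf i,\mathbf j)$-entries of both sides. Composing the two operators and inserting intermediate indices $\mathbf m=(m_1,\dots,m_k)$ gives
\begin{equation*}
\big(\pi_k(d_1)\pi_k(d_2)\big)_{\mathbf i}^{\mathbf j} = \sum_{\mathbf m} (d_1)_{\mathbf m}^{\mathbf j}\,(d_2)_{\mathbf i}^{\mathbf m}.
\end{equation*}
The key observation is that this is exactly the state sum obtained by stacking $d_1$ above $d_2$, labeling the top row by $\mathbf j$, the bottom row by $\mathbf i$, and the shared middle row by $\mathbf m$, and summing the product of Kronecker weights over all middle labelings. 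Because each weight factors as a product of $\delta$'s indexed by the connected components (edges and isolated vertices) of $d_1$ and of $d_2$, the summand factors over the connected components of the stacked graph $D=d_1\circ d_2$, and hence so does the sum over $\mathbf m$.

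The heart of the argument is then a case analysis of the components of $D$, organized so as to reproduce the three ingredients of \eqref{multiplication}. Each top vertex of $d_1$ and each bottom vertex of $d_2$ has degree at most one, and each middle vertex has degree at most two (at most one edge from $d_1$ and one from $d_2$), so every component of $D$ is a path or a cycle. I would show: (i) a path meeting two boundary vertices chains its $\delta$'s and, after summing the internal middle labels, collapses to the single weight of the corresponding edge of $d_3$ --- vertical, top-horizontal, or bottom-horizontal according to which rows it meets; (ii) a path meeting exactly one boundary vertex must terminate at a middle vertex that is isolated in the opposite diagram, contributing a factor $\delta_{\,\cdot\,,0}$ that collapses to the isolated-vertex weight of $d_3$; (iii) a cycle meets no boundary vertex, so it is a middle loop, and tracing its $\delta$'s and summing yields $\sum_{m=0}^{n}1=n+1=x$, giving one factor of $x$ per loop, i.e.\ $x^{\kappa(d_1,d_2)}$, while a boundary-free path has both endpoints isolated in one of the diagrams, so its labels are pinned to $0$ and chained in between, summing to exactly $1$. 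Assembling these factors yields
\begin{equation*}
\sum_{\mathbf m} (d_1)_{\mathbf m}^{\mathbf j}\,(d_2)_{\mathbf i}^{\mathbf m} = (n+1)^{\kappa(d_1,d_2)}\,(d_3)_{\mathbf i}^{\mathbf j} = \big(\pi_k(d_1 d_2)\big)_{\mathbf i}^{\mathbf j},
\end{equation*}
since $x=n+1$, $y=1$, and $d_1 d_2 = x^{\kappa}y^{\varepsilon}d_3$.

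I expect the main obstacle to be the clean handling of case (iii), and in particular the isolated-vertex contributions: one must verify that every boundary-free non-cyclic component contributes a factor of \emph{exactly} $1$, which is what makes the action consistent with the specialization $y=1$ and is precisely the feature distinguishing the rook--Brauer action from the Brauer action. This rests on the observation that such a component is always a path whose two endpoints carry weights $\delta_{\,\cdot\,,0}$, together with the degree bound that rules out more complicated middle components. A careful but routine check that the surviving $\delta$-factors are in bijection with the connected components of $d_3$ then identifies the two matrix entries and completes the proof.
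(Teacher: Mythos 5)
Your proposal is correct and follows essentially the same route as the paper: express both sides as a state sum over middle-row labelings of the stacked diagram, observe that the Kronecker-delta weights factor over connected components, and then do a component-by-component case analysis (boundary-to-boundary paths give the edges of $d_3$, chains ending at a middle dead end give the $\delta_{\,\cdot\,,0}$ of an isolated vertex, and middle cycles each contribute $n+1=x$). If anything, your write-up is slightly more complete than the paper's, since you also record the trivial check that $\mathbf{1}_k$ acts as the identity and treat explicitly the boundary-free middle paths whose contribution of exactly $1$ matches the specialization $y=1$, a case the paper leaves implicit.
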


\begin{proof}  It is possible to verify that the action of the generators \eqref{eq:Ract} satisfies the defining relations \eqref{Snrels}-\eqref{RBnrels}. This is done in \cite{MM}. To get the action on a general basis diagram \eqref{ActionOnTensorSpace}, one would then have to show that the action of the generators extends to \eqref{ActionOnTensorSpace}.  Our approach is to use the diagram calculus to show that the action on diagrams satisfies diagram multiplication. That is,  for diagrams $d_1, d_2 \in \RBc_{k}$ we show that
$$
(d_1 d_2)_{i_1, \ldots, i_k}^{j_1, \ldots, j_k} = \sum_{\ell_1, \ldots, \ell_k} 
(d_1)^{j_1, \ldots, j_k}_{\ell_1, \ldots, \ell_k}
(d_2)^{\ell_1, \ldots, \ell_k}_{i_1, \ldots,i_k}.
$$
We do so by  considering the edges of $d_1d_2$ case by case.

\medskip\noindent
\textbf{Case 1:} \emph{Isolated vertices}. 
If there is an isolated vertex in column $r$ of the top row of $d_1$, then both $(d_1 d_2)_{i_1, \ldots, i_k}^{j_1, \ldots, j_k}$ and $(d_1)^{j_1, \ldots, j_k}_{\ell_1, \ldots, \ell_k}(d_2)^{\ell_1, \ldots, \ell_k}_{i_1, \ldots,i_k}$ contain $\delta_{i_r,0}$ for all choices of $\ell_1, \ldots, \ell_k$. On the other hand,  an isolated vertex in $d_1 d_2$ occurs when a vertical edge in $d_1$ is connected to a series of  horizontal edges (possibly 0 of them) in the middle row of $d_1d_2$ which end at an isolated vertex in the middle row of $d_1d_2$.  For example,
$$\begin{tikzpicture}[scale=.5,line width=1pt] 
\foreach \i in {1,...,8} 
{ \path (\i,1.8) coordinate (T\i); \path (\i,.7) coordinate (B\i); \path (\i,-.7) coordinate (T2\i); \path (\i,-1.8) coordinate (B2\i);} 
\filldraw[fill= black!10,draw=black!10,line width=4pt]  (T1) -- (T8) -- (B8) -- (B1) -- (T1);
\filldraw[fill= black!10,draw=black!10,line width=4pt]  (T21) -- (T28) -- (B28) -- (B21) -- (T21);
\draw[black] (T2) -- (B5);
\draw[black] (T23) .. controls +(.1,-.75) and +(-.1,-.75) .. (T25) ;
\draw[black] (B3) .. controls +(.1,.75) and +(-.1,.75) .. (B6) ;
\draw[black] (T26) .. controls +(.1,-1.1) and +(.1,-1.1) .. (T22) ;
\foreach \i in {1,...,8} 
{ \fill (T\i) circle (4pt); \fill (B\i) circle (4pt); \fill (T2\i) circle (4pt); \fill (B2\i) circle (4pt);} 
\draw  (T2)  node[above=0.1cm]{$\v_a$};\draw  (B5)  node[below=0.1cm]{$\v_{a_1}$};
\draw  (B3)  node[below=0.1cm]{$\v_{a_2}$};\draw  (B6)  node[below=0.1cm]{$\v_{a_3}$};
\draw  (B2)  node[below=0.1cm]{$\v_b$};
\end{tikzpicture}
$$
Sequentially label these vertices $\v_{a}, \v_{a_1}, \v_{a_3}, \dots, \v_{a_t}, \v_b$ as shown in the diagram above.  Then  $(d_1 d_2)_{i_1, \ldots, i_k}^{j_1, \ldots, j_k}$ contains $\delta_{a,0}$ and $(d_1)^{j_1, \ldots, j_k}_{\ell_1, \ldots, \ell_k}(d_2)^{\ell_1, \ldots, \ell_k}_{i_1, \ldots,i_k}$ contains $\delta_{a,a_1} \delta_{a_1,a_2} \cdots \delta_{a_t, b} \delta_{b, 0}$ which equals $\delta_{a,0}$ when $a_1  = \cdots = a_t =  b = 0$.
The proof for when the isolated edge is in the bottom row of $d_1d_2$ is analogous.

\medskip\noindent
\textbf{Case 2: } \emph{Horizontal and vertical edges}.
A vertical edge in $d_1d_2$ occurs when a vertical edge in $d_2$ is connected to a vertical edge in $d_1$ by a series  of horizontal edges (possibly 0) in the middle row of $d_1d_2$. 
A horizontal edge in top row of $d_1d_2$ results from a horizontal edge in $t_1$ or two vertical edges in $d_1$ connected by a series of horizontal edges in the middle row of $d_1d_2$.  
For example,
$$
\begin{array}{c}
\begin{tikzpicture}[scale=.5,line width=1pt] 
\foreach \i in {1,...,8} 
{ \path (\i,1.8) coordinate (T\i); \path (\i,.7) coordinate (B\i); \path (\i,-.7) coordinate (T2\i); \path (\i,-1.8) coordinate (B2\i);} 
\filldraw[fill= black!10,draw=black!10,line width=4pt]  (T1) -- (T8) -- (B8) -- (B1) -- (T1);
\filldraw[fill= black!10,draw=black!10,line width=4pt]  (T21) -- (T28) -- (B28) -- (B21) -- (T21);
\draw[black] (T2) -- (B5);
\draw[black] (T25) .. controls +(.1,-.75) and +(-.1,-.75) .. (T22) ;
\draw[black] (B2) .. controls +(.1,1) and +(-.1,1) .. (B6) ;
\draw[black] (T26) .. controls +(.1,-.75) and +(.1,-.75) .. (T24) ;
\draw[black] (B4) .. controls +(.1,.75) and +(.1,.75) .. (B7);
\draw[black] (T27) -- (B25);
\foreach \i in {1,...,8} 
{ \fill (T\i) circle (4pt); \fill (B\i) circle (4pt); \fill (T2\i) circle (4pt); \fill (B2\i) circle (4pt);} 
\draw  (T2)  node[above=0.1cm]{$\v_a$};\draw  (B5)  node[below=0.1cm]{$\v_{a_1}$};
\draw  (B2)  node[below=0.1cm]{$\v_{a_2}$};\draw  (B6)  node[below=0.1cm]{$\v_{a_3}$};
\draw  (B4)  node[below=0.1cm]{$\v_{a_4}$};\draw  (B7)  node[below=0.1cm]{$\v_{a_5}$};
\draw  (B25)  node[below=0.1cm]{$\v_{b}$};
\end{tikzpicture}
\end{array}
\qquad
\begin{array}{c}
\begin{tikzpicture}[scale=.5,line width=1pt] 
\foreach \i in {0,...,7} 
{ \path (\i,1.8) coordinate (T\i); \path (\i,.7) coordinate (B\i); \path (\i,-.7) coordinate (T2\i); \path (\i,-1.8) coordinate (B2\i);} 
\filldraw[fill= black!10,draw=black!10,line width=4pt]  (T0) -- (T7) -- (B7) -- (B0) -- (T0);
\filldraw[fill= black!10,draw=black!10,line width=4pt]  (T20) -- (T27) -- (B27) -- (B20) -- (T20);
\draw[black] (T2) -- (B2);
\draw[black] (T22) .. controls +(.1,-.6) and +(-.1,-.6) .. (T23) ;
\draw[black] (B3) .. controls +(.1,.75) and +(-.1,.75) .. (B1) ;
\draw[black] (T21) .. controls +(.1,-1.1) and +(.1,-1.1) .. (T24) ;
\draw[black] (B4) .. controls +(-.1,.6) and +(.1,.6) .. (B5);
\draw[black] (T25) .. controls +(-.1,-.6) and +(.1, -.6) .. (T26);
\draw[black] (B6) -- (T5);
\foreach \i in {0,...,7} 
{ \fill (T\i) circle (4pt); \fill (B\i) circle (4pt); \fill (T2\i) circle (4pt); \fill (B2\i) circle (4pt);} 
\draw  (T2)  node[above=0.1cm]{$\v_a$};\draw  (B2)  node[below=0.1cm]{$\v_{a_1}$};
\draw  (B3)  node[below=0.1cm]{$\v_{a_2}$};\draw  (B1)  node[below=0.1cm]{$\v_{a_3}$};
\draw  (B4)  node[below=0.1cm]{$\v_{a_4}$};\draw  (B5)  node[below=0.1cm]{$\v_{a_5}$};
\draw  (B6)  node[below=0.1cm]{$\v_{a_6}$}; \draw (T5) node[above=0.1 cm]{$\v_{b}$};
\draw  (B25)  node[below=0.1cm]{$\vphantom{\v_{b}}$};
\end{tikzpicture}
\end{array}
$$
Label the top vertex in $d_1$ with $\v_a$ and sequentially label the connected vertices with $\v_{a_1}, \v_{a_2}, \dots , \v_{a_t}, \v_b$ as shown in the above diagrams.  Then  $(d_1 d_2)_{i_1, \ldots, i_k}^{j_1, \ldots, j_k}$ contains $\delta_{a,b}$ and $(d_1)^{j_1, \ldots, j_k}_{\ell_1, \ldots, \ell_k}(d_2)^{\ell_1, \ldots, \ell_k}_{i_1, \ldots,i_k}$ contains $\delta_{a,a_1} \delta_{a_1,a_2} \cdots \delta_{a_t, b}$ which equals $\delta_{a,b}$ when $a_1 = a_2 = \cdots = a_t = b$.
The proof for when the horizontal edge is in the bottom row is analogous.

\smallskip\noindent
\textbf{Case 3:} \emph{Loops}.
A loop in the middle row of $d_1d_2$ results from a series of  horizontal edges in the middle of $d_1d_2$.  For example,
$$\begin{tikzpicture}[scale=.5,line width=1pt] 
\foreach \i in {0,...,7} 
{ \path (\i,1.8) coordinate (T\i); \path (\i,.7) coordinate (B\i); \path (\i,-.7) coordinate (T2\i); \path (\i,-1.8) coordinate (B2\i);} 
\filldraw[fill= black!10,draw=black!10,line width=4pt]  (T0) -- (T7) -- (B7) -- (B0) -- (T0);
\filldraw[fill= black!10,draw=black!10,line width=4pt]  (T20) -- (T27) -- (B27) -- (B20) -- (T20);
\draw[black] (B1) .. controls +(.1,.6) and +(-.1,.6) .. (B2) ;
\draw[black] (T22) .. controls +(-.1,-.6) and +(.1,-.6) .. (T23) ;
\draw[black] (B3) .. controls +(.1,.6) and +(-.1,.6) .. (B5) ;
\draw[black] (T25) .. controls +(-.1,-1.1) and +(.1,-1.1) .. (T21);
\foreach \i in {0,...,7} 
{ \fill (T\i) circle (4pt); \fill (B\i) circle (4pt); \fill (T2\i) circle (4pt); \fill (B2\i) circle (4pt);} 
\draw (B1) node[below=0.1 cm]{$v_{a_1}$}; \draw (B2) node[below=0.1 cm]{$v_{a_2}$};
 \draw (B3) node[below=0.1 cm]{$v_{a_3}$}; \draw (B5) node[below=0.1 cm]{$v_{a_t}$}; 
\end{tikzpicture}
$$
Starting with the left most middle vertex, label the vertices $v_{a_1}, v_{a_2}, \dots, v_{a_t}$. When multiplying $d_1d_2$ in $\RB_k(n)$ the inner loop is removed and the product is scaled by $n+1$ (see \eqref{multiplication}).  On the other hand, the coefficient $(d_1)^{j_1, \ldots, j_k}_{\ell_1, \ldots, \ell_k}(d_2)^{\ell_1, \ldots, \ell_k}_{i_1, \ldots,i_k}$ contains $\delta_{a_1,a_2} \delta_{a_2,a_3} \cdots \delta_{a_t,a_1}$ which is equals 1 when $a_1 = a_2 = \cdots = a_t$ is any of the $n+1$ values in $\{0,1, \ldots, n\}$ and is 0 otherwise. 
\end{proof}

\begin{prop} The representation $\pi_k: \RB_k(n+1) \to \End(\VV^{\otimes k})$ is faithful for $n \ge k$.
\end{prop}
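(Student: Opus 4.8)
The plan is to prove injectivity of $\pi_k$ by showing that the representing matrices $\{\pi_k(d) : d \in \RBc_k\}$ are linearly independent in $\End(\VV^{\otimes k})$; since $\dim\RB_k(n+1) = \vert\RBc_k\vert$, this is equivalent to faithfulness. The engine will be a triangularity statement: to each diagram $d$ I attach a single matrix coordinate at which $\pi_k(d)$ is nonzero, while the only other diagrams that are nonzero at that coordinate have strictly more edges than $d$.

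\emph{Constructing the distinguished coordinate.} For a fixed $d \in \RBc_k$ with $\ell = \rank$-many... more precisely with $\ell$ edges, I label the $2k$ vertices of $d$ as follows: each edge receives a distinct color from $\{1,2,\ldots,\ell\}$ and both of its endpoints get that color, while every isolated vertex is labeled $0$. Reading the top labels as $(j_1,\ldots,j_k) = j(d)$ and the bottom labels as $(i_1,\ldots,i_k) = i(d)$ produces a legal index tuple precisely because $n \ge k \ge \ell$, so that $\ell$ distinct nonzero labels are available. This is exactly where the hypothesis $n \ge k$ is used: a Brauer diagram uses all $2k$ vertices and has $\ell = k$ edges, so it needs $n \ge k$ colors. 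By the weight definition following \eqref{ActionOnTensorSpace}, each connected component of $d$ contributes a factor $\delta_{c,c}=1$ (an edge) or $\delta_{0,0}=1$ (an isolated vertex), whence $(d)_{i(d)}^{j(d)} = 1$.

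\emph{Identifying which diagrams survive.} Next I determine all $d' \in \RBc_k$ with $(d')_{i(d)}^{j(d)} \neq 0$. Compatibility with the labeling forces every edge of $d'$ to join two equally-labeled vertices and every isolated vertex of $d'$ to carry the label $0$. Since each nonzero color appears on exactly the two endpoints of one edge of $d$, any vertex with a nonzero label must be matched in $d'$ to its $d$-partner; hence every edge of $d$ is also an edge of $d'$. The remaining, $0$-labeled vertices are exactly the isolated vertices of $d$, and $d'$ may either leave them isolated or join them in pairs by additional edges. Therefore $(d')_{i(d)}^{j(d)} = 1$ exactly when $d'$ is obtained from $d$ by adjoining a (possibly empty) partial matching on the isolated vertices of $d$, and $(d')_{i(d)}^{j(d)} = 0$ otherwise; in particular every such $d'$ satisfies $\ell(d') \ge \ell(d)$, with equality only for $d' = d$.

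\emph{The elimination.} Suppose $\sum_d c_d\,\pi_k(d) = 0$. Reading off the $(i(d_0),j(d_0))$-entry for a fixed $d_0$ gives $\sum_{d'} c_{d'} = 0$, the sum running over the diagrams $d'$ that complete $d_0$ by adding edges on its isolated vertices. I then argue by downward induction on the number of edges: when $\ell(d_0) = k$ the diagram $d_0$ has no isolated vertices, is its own only completion, and so $c_{d_0} = 0$ outright; and once $c_{d'} = 0$ is known for all $d'$ with more edges than $d_0$, every completion other than $d_0$ drops out and again $c_{d_0} = 0$. This forces all $c_d = 0$. The one subtlety requiring care — and the reason a naive ``one nonzero entry per diagram'' bijection fails — is the degeneracy caused by isolated vertices: a pair of isolated vertices both labeled $0$ is indistinguishable at the level of a single coordinate from an edge joining two $0$-labeled vertices. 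Organizing the argument as a triangular elimination ordered by edge count is exactly what circumvents this obstacle.
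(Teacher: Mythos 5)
Your proof is correct. It runs on the same engine as the paper's: label the two endpoints of each edge of $d$ with a common index, distinct indices for distinct edges, label isolated vertices with $0$, and observe that the resulting matrix coordinate has weight $1$ for $d$ while any other diagram that is nonzero there must contain extra edges. The differences are in the labeling detail and in how the elimination is organized. The paper argues by contradiction: it fixes a hypothetical kernel element, chooses a diagram $d'$ in its support that is maximal first in the number of vertical edges and then in the number of horizontal edges, and shows $d'$ is the unique diagram in the support hitting its distinguished coordinate. That two-step lexicographic maximality is forced by the paper's labeling, which reuses the indices $v_{m+1}, v_{m+2}, \ldots$ for the horizontal edges of the top row and of the bottom row separately; with that labeling, a diagram obtained by rewiring an equally-labeled top--bottom pair of horizontal edges into two vertical edges also survives at the coordinate, and only maximality in vertical edges rules it out. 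Your globally distinct coloring of all edges kills this degeneracy at the source, so the surviving diagrams are exactly $d$ together with a partial matching adjoined on its isolated vertices, and a single downward induction on total edge count suffices. What your version buys is an explicit triangular structure for the evaluation pairing between diagrams and distinguished coordinates, hence outright linear independence of the matrices $\{\pi_k(d) : d \in \RBc_k\}$ (which, combined with the dimension count, is exactly what feeds the isomorphism \eqref{SchurWeyl} later); the paper's version is more economical, since it only needs uniqueness of the maximal diagram within the support of a single kernel element. Both proofs use the hypothesis $n \ge k$ identically, to guarantee enough distinct nonzero labels for the edges.
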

\begin{proof} Suppose there exists some nonzero $y = \sum_{d\in \RBc_k} a_d\, d \in \Ker(\pi_k)$.  Choose a diagram $d'$ in the linear combination for $y$ such that

\begin{compactenum}
\item[(i)] $a_{d'}\neq 0$,
\item[(ii)] among the diagrams satisfying (i), $d'$ has a maximum number of vertical edges $m$, and
\item[(iii)] among the diagrams satisfying (i) and (ii), $d'$ has a maximum number of horizontal edges $\ell$.
\end{compactenum}

Now, consider the simple tensor $u$ such that 
(i) $\v_0$ is in the positions of the isolated vertices in the bottom row of $d'$,
(ii) $\v_1, \v_2, \dots, \v_m$ are in the positions of the bottom vertices of the vertical edges in $d'$, and
(iii) $\v_{m+1}, \v_{m+2}, \dots, \v_{m+\ell}$ are in the positions of the vertices of the horizontal edges in the bottom row of $d'$ such that the subscripts of the vectors in the positions of either end of a horizontal edge are the same.
Similarly, consider the simple tensor $u'$ such that 
(i) $\v_0$ is in the positions of the isolated vertices of the top row of $d'$,  
(ii) $\v_1, \v_2, \dots, \v_m$ are in the positions of the top vertices of the vertical edges in $d'$, and
(iii) $\v_{m+1}, \v_{m+2}, \dots, \v_{m+s}$ are in the positions of the vertices of the horizontal edges in the top row of $d'$ such that the subscripts of the vectors in the positions of either end of a horizontal edge are the same.

For example
$$
 \begin{tikzpicture}[scale=.5,line width=1pt] 
\foreach \i in {0,...,13} 
{ \path (\i,1) coordinate (T\i); \path (\i,-1) coordinate (B\i); } 
\filldraw[fill= black!10,draw=black!10,line width=4pt]  (T1) -- (T13) -- (B13) -- (B1) -- (T1);
\draw[black] (T3) -- (B8);
\draw[black] (T10) -- (B9);
\draw[black] (T13) -- (B10);
\draw[black] (T5) -- (B3);
\draw[black] (T1) .. controls +(.1,-.5) and +(-.1,-.5) .. (T2) ;
\draw[black] (T6) .. controls +(.1,-1.1) and +(-.1,-1.1) .. (T9) ;
\draw[black] (T7) .. controls +(.1,-.5) and +(-.1,-.5) .. (T8) ;
\draw[black] (B1) .. controls +(.1,.5) and +(-.1,.5) .. (B2) ;
\draw[black] (B4) .. controls +(.1,.75) and +(-.1,.75) .. (B7) ;
\draw[black] (B5) .. controls +(.1,.5) and +(-.1,.5) .. (B6) ;
\draw[black] (B11) .. controls +(.1,.5) and +(-.1, .5) .. (B12);
\foreach \i in {1,...,13} 
{ \fill (T\i) circle (4pt); \fill (B\i) circle (4pt); } 
\draw  (T1)  node[above=0.1cm]{$\v_5$};\draw  (T2)  node[above=0.1cm]{$\v_5$};
\draw  (T3)  node[above=0.1cm]{$\v_2$};\draw  (T4)  node[above=0.1cm]{$\v_0$};
\draw  (T5)  node[above=0.1cm]{$\v_1$};\draw  (T6)  node[above=0.1cm]{$\v_6$};
\draw  (T7)  node[above=0.1cm]{$\v_7$};\draw  (T8)  node[above=0.1cm]{$\v_7$};
\draw  (T9)  node[above=0.1cm]{$\v_6$};\draw  (T10)  node[above=0.1cm]{$\v_3$};
\draw  (T11)  node[above=0.1cm]{$\v_0$};\draw  (T12)  node[above=0.1cm]{$\v_0$};
\draw  (T13)  node[above=0.1cm]{$\v_4$};\draw  (B1)  node[below=0.1cm]{$\v_5$};
\draw  (B2)  node[below=0.1cm]{$\v_5$};\draw  (B3)  node[below=0.1cm]{$\v_1$};
\draw  (B4)  node[below=0.1cm]{$\v_6$};\draw  (B5)  node[below=0.1cm]{$\v_7$};
\draw  (B6)  node[below=0.1cm]{$\v_7$};\draw  (B7)  node[below=0.1cm]{$\v_6$};
\draw  (B8)  node[below=0.1cm]{$\v_2$};\draw  (B9)  node[below=0.1cm]{$\v_3$};
\draw  (B10)  node[below=0.1cm]{$\v_4$};\draw  (B11)  node[below=0.1cm]{$\v_8$};
\draw  (B12)  node[below=0.1cm]{$\v_8$};\draw  (B13)  node[below=0.1cm]{$\v_0$};
\draw (B0)  node[below=0.1cm]{$u:\quad $};\draw  (T0)  node[above=0.1cm]{$u':\quad $};
\end{tikzpicture}
$$
Note that the hypothesis $n\geq k$ guarantees that such simple tensors $u$ and $u'$ exist.

By this construction,  the simple tensor $u'$ has coefficient 1 in the expansion of $d' \cdot u$.  We claim that no other diagram with nonzero coefficient in $y$ will act on $u$ with a nonzero coefficient of $u'$.  In order for another diagram $d''$ in $y$ to possibly produce a nonzero coefficient of $u'$ when acting on $u$, $d''$ must have the same bottom row as $d'$.  This follows from the choice of $d'$ having the maximum number of vertical edges and horizontal edges and choosing distinct $\v_i$ to put in each position of the edges of $d'$.  The same conditions force the top row of $d'$ and $d''$ to be the same.  Therefore, $d'' = d'$ and thus  $y\cdot u \not= 0$, which contradicts the assumption that $y\in \Ker(\pi_k)$.  Thus $\Ker(\pi_k) = 0$ and $\pi_k$ is faithful. \end{proof}

\begin{thm}\label{thm:commutingact}  Let  $\rho_{\VV^{\otimes k}}: \O_n(\CC) \rightarrow  \End(\VV^{\otimes k})$ 
denote the representation of $\O_n(\CC)$  on $\VV^{\otimes k}.$   Then  $\pi_k( \RB_k(n+1))$ and   $\rho_{\VV^{\otimes k}} (\O_n(\CC))$ commute in $\End(\VV^{\otimes k})$.
Thus, $\pi_k( \RB_k(n+1))\subseteq \End_{\O_n(\CC)}(\VV^{\otimes k})$ and  
$\rho_{\VV^{\otimes k}}(\O_n(\CC)) \subseteq \End_{ \RB_k(n+1)}(\VV^{\otimes k}).$   
\end{thm}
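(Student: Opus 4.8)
The plan is to reduce the assertion to the three algebra generators and then to a single pair of tensor slots, after which the whole computation collapses to two elementary identities, one of which is exactly the orthogonality relation $g g^t = I$. First I would observe that the set of endomorphisms of $\VV^{\otimes k}$ commuting with every $\rho_{\VV^{\otimes k}}(g)$, $g \in \O_n(\CC)$, is a subalgebra of $\End(\VV^{\otimes k})$. Since $\RB_k(n+1)$ is generated by $s_1, \ldots, s_{k-1}$, $t_1, \ldots, t_{k-1}$, and $p_1, \ldots, p_k$, it therefore suffices to show that each of $\pi_k(s_i)$, $\pi_k(t_i)$, and $\pi_k(p_i)$ commutes with $\rho_{\VV^{\otimes k}}(g)$ for all $g$. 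Next, because $\rho_{\VV^{\otimes k}}(g)$ acts diagonally as $g \ot \cdots \ot g$ while the generators act locally by \eqref{eq:Ract} (as $s$ or $t$ in slots $i,i+1$, or $p$ in slot $i$, tensored with identity maps elsewhere), and since $\id_\VV$ commutes with $g$, the problem reduces to three base cases: that $s$ and $t$ commute with $g \ot g$ on $\VV \ot \VV$, and that $p$ commutes with $g$ on $\VV$.

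The cases $s$ and $p$ are immediate from the $\O_n(\CC)$-module structure of $\VV = \VV(0) \oplus \VV(1)$. The flip $s$ satisfies $s\big((g v_a) \ot (g v_b)\big) = (g v_b) \ot (g v_a) = (g\ot g)\,s(v_a \ot v_b)$, which is valid for any $g \in \GL_n(\CC)$. The map $p$ of \eqref{eq:stpdef} is the projection of $\VV$ onto the $\O_n(\CC)$-fixed line $\VV(0) = \CC v_0$ along the stable complement $\VV(1)$; since $g v_0 = v_0$ and $g\,\VV(1) \subseteq \VV(1)$, it is an $\O_n(\CC)$-module map, so $p(g v_a) = g(p v_a)$ in both cases $a = 0$ and $a \ge 1$.

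The crux is the operator $t$, and this is the only step that uses orthogonality. Writing $\tilde g$ for the $(n+1)\times(n+1)$ matrix that fixes $v_0$ and acts as $g$ on $\VV(1)$ (so that $\tilde g$ is again orthogonal), I would first check that the ``coform'' $\sum_{k=0}^n v_k \ot v_k$ is $\O_n(\CC)$-invariant:
\begin{equation*}
(\tilde g \ot \tilde g)\sum_{k} v_k \ot v_k = \sum_{i,j}\Big(\sum_k \tilde g_{ik}\tilde g_{jk}\Big) v_i \ot v_j = \sum_{i,j} (\tilde g \tilde g^t)_{ij}\, v_i \ot v_j = \sum_i v_i \ot v_i,
\end{equation*}
using $\tilde g \tilde g^t = I$. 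Dually, applying $t$ after $\tilde g \ot \tilde g$ and collapsing with the factor $\delta_{p,q}$ reduces the contraction to $\sum_p \tilde g_{pa}\tilde g_{pb} = (\tilde g^t \tilde g)_{ab} = \delta_{a,b}$, whence $t\big((g v_a)\ot (g v_b)\big) = \delta_{a,b}\sum_k v_k \ot v_k = (g \ot g)\, t(v_a \ot v_b)$. I expect this $t$-case to be the main point: it is precisely where the defining relation $g g^t = I$ of $\O_n(\CC)$ is indispensable, whereas $s$ and $p$ would commute with any element of $\GL_n(\CC)$ preserving the decomposition $\VV = \VV(0)\oplus\VV(1)$. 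Finally, the displayed inclusions $\pi_k(\RB_k(n+1)) \subseteq \End_{\O_n(\CC)}(\VV^{\otimes k})$ and $\rho_{\VV^{\otimes k}}(\O_n(\CC)) \subseteq \End_{\RB_k(n+1)}(\VV^{\otimes k})$ are just reformulations of the commuting property and follow at once.
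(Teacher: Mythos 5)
Your proposal is correct and takes essentially the same route as the paper: reduce to the generators $s_i$, $t_i$, $p_i$, use their local action from \eqref{eq:Ract} together with the diagonal action of $g$ to reduce everything to checking that $s$ and $t$ commute with $g \ot g$ on $\V \ot \V$ and that $p$ commutes with $g$ on $\V$, with orthogonality entering only in the $t$-case. Your one refinement is the extended orthogonal matrix $\tilde g$ fixing $v_0$, which handles all indices $a,b \in \{0,1,\ldots,n\}$ uniformly, whereas the paper computes only $1 \le a,b \le n$ and remarks that the cases $a=0$ or $b=0$ must be checked separately.
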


\begin{proof} The elements $t_i, s_i, p_j (1 \le i <k, 1 \le j \le k)$ generate $\RB_k(n)$.  Let $g \in \O_n(\CC)$ have matrix entries $g_{ij}$ so that $g \cdot \v_j = \sum_{i = 1}^n g_{ij} \, \v_i$ and $g \cdot \v_0 = \v_0$.  The fact that $gg^T = I_n$ since $g \in \O_n(\CC)$ tells us that $\sum_{\ell = 1}^n  g_{i\ell} g_{j\ell} = \delta_{ij}$. From \eqref{eq:stpdef}  and \eqref{eq:Ract} we see that  $s_i, t_i$ act only on tensor positions $i$ and $i+1$ and $p_j$ acts only on tensor position $j$ (i.e., these generators act as the identity elsewhere), so it is sufficient to verify that $g$ commutes with $s$ and $t$  on $\V \otimes \V$ and that $g$ commutes with $p$ on $\V$.  These are straight-forward calculations and we provide one illustrative calculation here.  Let $1 \le a,b \le n$. Then
\begin{align*}
g t \cdot(\v_a \otimes \v_b) &= \delta_{ab} \sum_{\ell = 0}^n g (\v_\ell \otimes \v_\ell) 
= \delta_{ab}(\v_0 \otimes \v_0) +  \delta_{ab}\sum_{\ell = 1}^n  \sum_{i=1}^n \sum_{j=1}^n g_{i\ell} g_{j\ell} (\v_i \otimes \v_j) \\
& =  \delta_{ab}(\v_0 \otimes \v_0) +  \delta_{ab}\sum_{i = 1}^n  (\v_i \otimes \v_i) =  \delta_{ab}\sum_{i = 0}^n  (\v_i \otimes \v_i),
\\
tg  \cdot(\v_a \otimes \v_b) &= \sum_{i=1}^n \sum_{j = 1}^n g_{ia} g_{jb} t(\v_i \otimes \v_j) 
= \sum_{i=1}^n  g_{ia} g_{ib} t(\v_i \otimes \v_i)  \\
&= \delta_{ab} t(\v_i \otimes \v_i)=  \delta_{ab}\sum_{\ell = 0}^n  (\v_i \otimes \v_i),
\end{align*}
The cases where $a=0$ or $b=0$  are even easier to verify and they must be considered separately since $g$ acts differently on $\v_0$. \end{proof}

The previous theorem gives a faithful embedding of $\RB_k(n+1)$ in  $\End_{\O_n(\CC)}(\V^{\otimes k})$ for $n \ge  k$.  In the next section, we prove that the dimension of the centralizer algebra equals the dimension of $\RB_k(n+1)$ and therefore
\begin{equation}\label{SchurWeyl}
\RB_k(n+1) \cong \End_{\O_n(\CC)}(\V^{\otimes k}), \qquad  n \ge k.
\end{equation}

\end{subsection}

\begin{subsection}{RSK insertion, paths, and vacillating tableaux}\label{sec:vacillating}

For $\lambda \in \Lambda_k$ we define a \emph{rook-Brauer path} (or simply a \emph{path}) of shape $\lambda$ and length $k$ to be a path of length $k$  in the Bratteli diagram $\mathcal B$ from $\emptyset \in \Lambda_0$ to $\lambda \in \Lambda_k$.    These paths exactly correspond to  sequences
$( \lambda^0, \lambda^1,  \ldots, \lambda^k)$ of integer partitions such that 
(i) $\lambda^0 = \emptyset$,
(ii)  $\lambda^k = \lambda$, and 
(iii) $\lambda^{i+1}$ is obtained from $\lambda^i$ by adding a box ($\lambda^{i+1} = \lambda^i + \square$), removing a box ($\lambda^{i+1} = \lambda^i - \square$), or doing nothing ($\lambda^{i+1} = \lambda^i$).   As an example, the following is a path of shape $\lambda = (2)$ and length $k = 14$,
$$
\left( \emptyset, 
\begin{array}{c}\begin{tikzpicture}[scale=.25]  \draw (0,0) -- (1,0) -- (1,1) -- (0,1) -- (0,0); \end{tikzpicture}\end{array},
\begin{array}{c}\begin{tikzpicture}[scale=.25]  \draw (0,0) -- (1,0) -- (1,1) -- (0,1) -- (0,0); \end{tikzpicture}\end{array},
\emptyset,
\begin{array}{c}\begin{tikzpicture}[scale=.25]  \draw (0,0) -- (1,0) -- (1,1) -- (0,1) -- (0,0); \end{tikzpicture}\end{array},
\begin{array}{c}\begin{tikzpicture}[scale=.25] 
\draw (0,0) -- (1,0) -- (1,1) -- (0,1) -- (0,0);  \draw (1,0) -- (2,0) -- (2,1) -- (1,1);
 \end{tikzpicture}\end{array},
 \begin{array}{c}\begin{tikzpicture}[scale=.25] 
\draw (0,0) -- (1,0) -- (1,1) -- (0,1) -- (0,0);  \draw (1,0) -- (2,0) -- (2,1) -- (1,1);  \draw (0,0) -- (0,-1) -- (1,-1) -- (1,0);
 \end{tikzpicture}\end{array},
  \begin{array}{c}\begin{tikzpicture}[scale=.25] 
\draw (0,0) -- (1,0) -- (1,1) -- (0,1) -- (0,0);   \draw (0,0) -- (0,-1) -- (1,-1) -- (1,0);
 \end{tikzpicture}\end{array},
   \begin{array}{c}\begin{tikzpicture}[scale=.25] 
\draw (0,0) -- (1,0) -- (1,1) -- (0,1) -- (0,0);   \draw (0,0) -- (0,-1) -- (1,-1) -- (1,0);
 \end{tikzpicture}\end{array},
  \begin{array}{c}\begin{tikzpicture}[scale=.25] 
\draw (0,0) -- (1,0) -- (1,1) -- (0,1) -- (0,0);  \draw (1,0) -- (2,0) -- (2,1) -- (1,1);  \draw (0,0) -- (0,-1) -- (1,-1) -- (1,0);
 \end{tikzpicture}\end{array},
   \begin{array}{c}
\begin{tikzpicture}[scale=.25] 
\draw (0,0) -- (1,0) -- (1,1) -- (0,1) -- (0,0);  \draw (1,0) -- (2,0) -- (2,1) -- (1,1);  \draw (0,0) -- (0,-1) -- (1,-1) -- (1,0); \draw (1,-1) -- (2,-1) -- (2,0) -- (1,0); 
 \end{tikzpicture}\end{array},
   \begin{array}{c}\begin{tikzpicture}[scale=.25] 
\draw (0,0) -- (1,0) -- (1,1) -- (0,1) -- (0,0);  \draw (1,0) -- (2,0) -- (2,1) -- (1,1);  \draw (0,0) -- (0,-1) -- (1,-1) -- (1,0);
 \end{tikzpicture}\end{array},
   \begin{array}{c}\begin{tikzpicture}[scale=.25] 
\draw (0,0) -- (1,0) -- (1,1) -- (0,1) -- (0,0);  \draw (1,0) -- (2,0) -- (2,1) -- (1,1);   \draw (2,0) -- (3,0) -- (3,1) -- (2,1);  \draw (0,0) -- (0,-1) -- (1,-1) -- (1,0);
 \end{tikzpicture}\end{array},
   \begin{array}{c}\begin{tikzpicture}[scale=.25] 
\draw (0,0) -- (1,0) -- (1,1) -- (0,1) -- (0,0);  \draw (1,0) -- (2,0) -- (2,1) -- (1,1);   \draw (2,0) -- (3,0) -- (3,1) -- (2,1); 
 \end{tikzpicture}\end{array},
 \begin{array}{c}\begin{tikzpicture}[scale=.25] 
\draw (0,0) -- (1,0) -- (1,1) -- (0,1) -- (0,0);  \draw (1,0) -- (2,0) -- (2,1) -- (1,1);
 \end{tikzpicture}\end{array}
\right).
$$
These paths are very closely related to the \emph{vacillating tableaux} studied in \cite{HL} and   \cite{CDDSY}. 
Let $\mathcal{T}_k(\lambda)$ denote the set of  paths of shape $\lambda$ and length $k$.  
Then from \eqref{eq:DimensionPaths} we have
$$
|\mathcal{T}_k(\lambda)| = m_k^\lambda = \dim(\M_k^\lambda) = \mult_{\V^{\otimes k}} (\V(\lambda)).
$$

  The rook-Brauer diagrams in this paper are special cases of the set partition diagrams in the partition algebra defined in \cite{Ma}, \cite{Jo}.    In \cite{HL}, a Robinson-Schensted-Knuth insertion algorithm is given that turns set partition diagrams to pairs of paths in the Bratteli diagram of the partition algebra.   When the algorithm is restricted  to rook-Brauer diagrams, at each step we either add a box, remove a box, or stay the same, and thus we obtain a bijection between rook-Brauer diagrams and pairs of rook-Brauer paths $(S,T)$ of shape $\lambda \in \Lambda_k$ and length $k$.  We illustrate this bijection in Example \ref{RSK} below.

Our bijection gives us the following dimension formula,
\begin{equation}
\dim(\RB_k(x)) = |\RBc_k| = \sum_{r = 0}^k \sum_{\lambda \vdash r} |\mathcal{T}_k(\lambda)|^2 = \sum_{r = 0}^k \sum_{\lambda \vdash r} (m_k^\lambda)^2 = \dim( \End_{\O_n(\CC)}(\V^{\otimes k})),
\end{equation}
which completes the proof of \eqref{SchurWeyl}.

\begin{examp}\label{RSK}{\rm
In this example, we illustrate the bijection of \cite{HL} that turns rook-Brauer diagrams into pairs of rook-Brauer paths. First we label our diagram $1, \ldots, k$ in the top row and $2k, \ldots, k+1$ in the bottom row and unfold our diagram to put it in a single row:
$$
\begin{array}{c}
\begin{tikzpicture}[scale=.5,line width=2pt] 
\foreach \i in {1,...,6}
{ \path (\i,.8) coordinate (T\i); \path (\i,-.8) coordinate (B\i); } 
\filldraw[fill= black!10,draw=black!10,line width=4pt]  (T1) -- (T6) -- (B6) -- (B1) -- (T1);
\draw (T1) -- (B3);
\draw (T3) -- (B1);
\draw (T5) -- (B4);
\draw (T2) .. controls +(.1,-.5) and +(-.1,-.5) .. (T4);
\draw (B5) .. controls +(.1,.5) and +(-.1,.5) .. (B6);
\foreach \i in {1,...,6}
{\fill (T\i) circle (4pt); \fill (B\i) circle (4pt); \draw (T\i) node[above=0.1cm]{$\i$};}
\draw (B1) node[below=0.1cm]{12}; \draw (B2) node[below=0.1cm]{11}; \draw (B3) node[below = 0.1cm]{10};
\draw (B4) node[below=0.1cm]{9}; \draw (B5) node[below=0.1cm]{8}; \draw (B6) node[below=0.1cm]{7};
\end{tikzpicture}
\end{array}
\qquad\rightarrow\qquad
\begin{array}{c}
\begin{tikzpicture}[scale=.5,line width=2pt]
\foreach \i in {1,...,12}
{\path (\i, 0) coordinate (T\i);}
\filldraw[fill=black!10,draw=black!10,line width=4pt] (T1) -- (T12) -- (12,2) -- (1,2) -- (T1);
\draw (T1) .. controls +(.2,2.5) and +(-.2,2.5) .. (T10);
\draw (T2) .. controls +(.1,.5) and +(-.1,.5) .. (T4);
\draw (T3) .. controls +(.1,1.8) and +(-.1,1.8) .. (T12);
\draw (T5) .. controls +(.1,1.5) and +(-.1,1.5) .. (T9);
\draw (T7) .. controls +(.1,.5) and +(-.1,.5) .. (T8);
\foreach \i in {1,...,12}
{\fill (T\i) circle (4pt); \draw (T\i) node[below=0.1cm]{\i};}
\draw (11,1) node[above]{1};
\draw (10.15,.15) node[above]{3};
\draw (9.1,.1) node[above]{4};
\draw (8,.1) node[above]{5};
\draw (4, .1) node[above]{9};
\end{tikzpicture}
\end{array}
$$
We then label each edge in the diagram by $2 k - i + 1$, if $i$ is the right endpoint of the edge.

If vertex $i$ is the left endpoint of  edge $a$, we let $E_i=a_L$, if vertex $i$ is the right endpoint of  edge $a$, we let $E_i=a_R$, and if vertex $i$ is not incident to an edge, we let $E_i=\emptyset$. In this way we get an insertion sequence of the form,
$$(E_1, E_2, \ldots, E_{2k}) = ( 3_L, 9_L, 1_L, 9_R, 4_L, \emptyset, 5_L, 5_R, 4_R, 3_R, \emptyset, 1_R).$$
Now let $T^{(0)} = \emptyset$  and recursively define
$$
T^{(i)} = \begin{cases}
E_i \xrightarrow{\scriptstyle{RSK}} T^{(i-1)}, & \text{ if } E_i = a_L\\
E_i \xleftarrow{\,\,\,jdt\,\,} T^{(i-1)}, & \text { if } E_i = a_R\\
\end{cases}
$$
where $RSK$ denotes Robinson-Schensted-Knuth column insertion and $jdt$ denotes jeu de taquin (for details on these well-known combinatorial algorithms see \cite{HL} and the references therein).  In our running example, we insert and delete as follows:
$$
\begin{tabular}{rlcl c rlcl}
1)& $3_L$ & $\xrightarrow{RSK}$& $\begin{array}{l}\begin{tikzpicture}[scale=.35]  \draw (0,0) -- (1,0) -- (1,1) -- (0,1) -- (0,0);\draw (.5,.5) node{$\scriptstyle{3}$};   \end{tikzpicture}    \end{array}$                      
&\qquad\qquad\qquad& 
7)& $5_L$ & $\xrightarrow{RSK}$ &    $\begin{array}{l}\begin{tikzpicture}[scale=.35] 
\draw (0,0) -- (1,0) -- (1,1) -- (0,1) -- (0,0);\draw (.5,.5) node{$\scriptstyle{1}$}; 
 \draw (1,0) -- (2,0) -- (2,1) -- (1,1) -- (1,0); \draw (1.5,.5) node{$\scriptstyle{4}$};  
  \draw (2,0) -- (3,0) -- (3,1) -- (2,1) -- (2,0); \draw (2.5,.5) node{$\scriptstyle{5}$};  
\draw (0,-1) -- (1,-1) -- (1,0) -- (0,0) -- (0,-1);\draw (.5,-.5) node{$\scriptstyle{3}$};  
 \end{tikzpicture} \end{array}$ \\
2)& $9_L$ & $\xrightarrow{RSK}$&    
$\begin{array}{l}\begin{tikzpicture}[scale=.35] 
\draw (0,0) -- (1,0) -- (1,1) -- (0,1) -- (0,0);\draw (.5,.5) node{$\scriptstyle{3}$};  \draw (1,0) -- (2,0) -- (2,1) -- (1,1) -- (1,0); \draw (1.5,.5) node{$\scriptstyle{9}$};  \end{tikzpicture} \end{array}$               &&  
8)& $5_R$ & $\xleftarrow{\,\,\,jdt\,\,}$ &  
$\begin{array}{l}\begin{tikzpicture}[scale=.35] 
\draw (0,0) -- (1,0) -- (1,1) -- (0,1) -- (0,0);\draw (.5,.5) node{$\scriptstyle{1}$}; 
 \draw (1,0) -- (2,0) -- (2,1) -- (1,1) -- (1,0); \draw (1.5,.5) node{$\scriptstyle{4}$};  
\draw (0,-1) -- (1,-1) -- (1,0) -- (0,0) -- (0,-1);\draw (.5,-.5) node{$\scriptstyle{3}$};  
 \end{tikzpicture}\end{array}$ \\
3)& $1_L$ & $\xrightarrow{RSK}$&  $\begin{array}{l}\begin{tikzpicture}[scale=.35] 
\draw (0,0) -- (1,0) -- (1,1) -- (0,1) -- (0,0);\draw (.5,.5) node{$\scriptstyle{1}$}; 
 \draw (1,0) -- (2,0) -- (2,1) -- (1,1) -- (1,0); \draw (1.5,.5) node{$\scriptstyle{9}$};  
\draw (0,-1) -- (1,-1) -- (1,0) -- (0,0) -- (0,-1);\draw (.5,-.5) node{$\scriptstyle{3}$};  
 \end{tikzpicture}\end{array}$               &&  
9)& $4_R$ & $\xleftarrow{\,\,\,jdt\,\,}$ & $\begin{array}{l}\begin{tikzpicture}[scale=.35] 
\draw (0,0) -- (1,0) -- (1,1) -- (0,1) -- (0,0);\draw (.5,.5) node{$\scriptstyle{1}$}; 
\draw (0,-1) -- (1,-1) -- (1,0) -- (0,0) -- (0,-1);\draw (.5,-.5) node{$\scriptstyle{3}$};  
 \end{tikzpicture}\end{array}$\\
4)& $9_R$ & $\xleftarrow{\,\,\,jdt\,\,}$&   $\begin{array}{l}\begin{tikzpicture}[scale=.35] 
\draw (0,0) -- (1,0) -- (1,1) -- (0,1) -- (0,0);\draw (.5,.5) node{$\scriptstyle{1}$}; 
\draw (0,-1) -- (1,-1) -- (1,0) -- (0,0) -- (0,-1);\draw (.5,-.5) node{$\scriptstyle{3}$};  
 \end{tikzpicture}\end{array}$                 &&  
10)& $3_R$ & $\xleftarrow{\,\,\,jdt\,\,}$ & $\begin{array}{l}\begin{tikzpicture}[scale=.35] 
\draw (0,0) -- (1,0) -- (1,1) -- (0,1) -- (0,0);\draw (.5,.5) node{$\scriptstyle{1}$};  
 \end{tikzpicture}\end{array}$\\
5)& $4_L$ & $\xrightarrow{RSK}$& $\begin{array}{l}\begin{tikzpicture}[scale=.35] 
\draw (0,0) -- (1,0) -- (1,1) -- (0,1) -- (0,0);\draw (.5,.5) node{$\scriptstyle{1}$}; 
 \draw (1,0) -- (2,0) -- (2,1) -- (1,1) -- (1,0); \draw (1.5,.5) node{$\scriptstyle{4}$};  
\draw (0,-1) -- (1,-1) -- (1,0) -- (0,0) -- (0,-1);\draw (.5,-.5) node{$\scriptstyle{3}$};  
 \end{tikzpicture} \end{array}$              &&  
11)& $\emptyset$ & & $\begin{array}{l}\begin{tikzpicture}[scale=.35] 
\draw (0,0) -- (1,0) -- (1,1) -- (0,1) -- (0,0);\draw (.5,.5) node{$\scriptstyle{1}$}; 
 \end{tikzpicture}\end{array}$\\
6)& $\emptyset $ & &  $\begin{array}{l} \begin{tikzpicture}[scale=.35] 
\draw (0,0) -- (1,0) -- (1,1) -- (0,1) -- (0,0);\draw (.5,.5) node{$\scriptstyle{1}$}; 
 \draw (1,0) -- (2,0) -- (2,1) -- (1,1) -- (1,0); \draw (1.5,.5) node{$\scriptstyle{4}$};  
\draw (0,-1) -- (1,-1) -- (1,0) -- (0,0) -- (0,-1);\draw (.5,-.5) node{$\scriptstyle{3}$};  
 \end{tikzpicture}\end{array}$               &&  
12)& $1_R$ & $\xleftarrow{\,\,\,jdt\,\,}$ & $\emptyset$\\
\end{tabular}
$$
We then take the first $k$ shapes and the last $k$ shapes (reversed) to be our pair of paths
\begin{align*}
P &= (
\emptyset,
\begin{array}{l}\begin{tikzpicture}[scale=.25] 
\draw (0,0) -- (1,0) -- (1,1) -- (0,1) -- (0,0);
 \end{tikzpicture} \end{array},
 \begin{array}{l}\begin{tikzpicture}[scale=.25] 
\draw (0,0) -- (1,0) -- (1,1) -- (0,1) -- (0,0);\draw (1,0) -- (2,0) -- (2,1) -- (1,1) -- (1,0); 
 \end{tikzpicture} \end{array},
 \begin{array}{l}\begin{tikzpicture}[scale=.25] 
\draw (0,0) -- (1,0) -- (1,1) -- (0,1) -- (0,0); \draw (1,0) -- (2,0) -- (2,1) -- (1,1) -- (1,0);  
\draw (0,-1) -- (1,-1) -- (1,0) -- (0,0) -- (0,-1);
 \end{tikzpicture} \end{array},
  \begin{array}{l}\begin{tikzpicture}[scale=.25] 
\draw (0,0) -- (1,0) -- (1,1) -- (0,1) -- (0,0); 
\draw (0,-1) -- (1,-1) -- (1,0) -- (0,0) -- (0,-1);
 \end{tikzpicture} \end{array},
  \begin{array}{l}\begin{tikzpicture}[scale=.25] 
\draw (0,0) -- (1,0) -- (1,1) -- (0,1) -- (0,0); \draw (1,0) -- (2,0) -- (2,1) -- (1,1) -- (1,0);  
\draw (0,-1) -- (1,-1) -- (1,0) -- (0,0) -- (0,-1);
 \end{tikzpicture} \end{array},
  \begin{array}{l}\begin{tikzpicture}[scale=.25] 
\draw (0,0) -- (1,0) -- (1,1) -- (0,1) -- (0,0); \draw (1,0) -- (2,0) -- (2,1) -- (1,1) -- (1,0);  
\draw (0,-1) -- (1,-1) -- (1,0) -- (0,0) -- (0,-1);
 \end{tikzpicture} \end{array}), \\
Q &= (
\emptyset,
\begin{array}{l}\begin{tikzpicture}[scale=.25] 
\draw (0,0) -- (1,0) -- (1,1) -- (0,1) -- (0,0); 
 \end{tikzpicture} \end{array},
\begin{array}{l}\begin{tikzpicture}[scale=.25] 
\draw (0,0) -- (1,0) -- (1,1) -- (0,1) -- (0,0); 
 \end{tikzpicture} \end{array},
\begin{array}{l}\begin{tikzpicture}[scale=.25] 
\draw (0,0) -- (1,0) -- (1,1) -- (0,1) -- (0,0); 
\draw (0,-1) -- (1,-1) -- (1,0) -- (0,0) -- (0,-1);
 \end{tikzpicture} \end{array},
\begin{array}{l}\begin{tikzpicture}[scale=.25] 
\draw (0,0) -- (1,0) -- (1,1) -- (0,1) -- (0,0); \draw (1,0) -- (2,0) -- (2,1) -- (1,1) -- (1,0);  
\draw (0,-1) -- (1,-1) -- (1,0) -- (0,0) -- (0,-1);
 \end{tikzpicture} \end{array}
,
\begin{array}{l}\begin{tikzpicture}[scale=.25] 
\draw (0,0) -- (1,0) -- (1,1) -- (0,1) -- (0,0); \draw (1,0) -- (2,0) -- (2,1) -- (1,1) -- (1,0);   \draw (2,0) -- (3,0) -- (3,1) -- (2,1) -- (2,0); 
\draw (0,-1) -- (1,-1) -- (1,0) -- (0,0) -- (0,-1);
 \end{tikzpicture} \end{array},
\begin{array}{l}\begin{tikzpicture}[scale=.25] 
\draw (0,0) -- (1,0) -- (1,1) -- (0,1) -- (0,0); \draw (1,0) -- (2,0) -- (2,1) -- (1,1) -- (1,0);  
\draw (0,-1) -- (1,-1) -- (1,0) -- (0,0) -- (0,-1);
 \end{tikzpicture} \end{array}).
\end{align*}}
\end{examp}

\end{subsection}

\begin{subsection}{Semisimplicity of $\RB_k(x)$ for generic $x \in \CC$}\label{semisimplesection}

The coefficients of the relations  in the presentation of $\RB_k(x)$ are polynomials in $x$, and $\RB_k(x)$ is semisimple for each  $x \in \ZZ$ with $x \ge k$ (i.e., infinitely many). Thus, it follows from the Tits deformation theorem (see for example  \cite[Theorem 5.13]{HR} or \cite[68.7]{CR}) that
\begin{enumerate}
\item[(a)]  $\RB_k(x)$ is semisimple for all but a finite number of $x\in \CC$.
\item[(b)]  If $x \in \CC$ is such that $\RB_k(x)$ is semisimple, then
\begin{enumerate}
\item  $\Lambda_k$ is an index set for the irreducible $\RB_k(x)$-modules.
\item If  $\M^\lambda_k$ is the irreducible $\RB_k(x)$-module labeled by $\lambda \in \Lambda_k$, then $\dim(\V^\lambda) = m_k^\lambda.$
\end{enumerate}
\end{enumerate}
\end{subsection}

\end{section}

\begin{section}{Seminormal Representations}

Throughout this section let $\KK = \CC$ and let $x\in \CC$ be chosen such that $\RB_k(x)$ is semisimple.  We explicitly construct $\RB_k(x)$-modules $\M_k^\lambda$,  for each $\lambda \in \Lambda_k$. These representations have bases labeled by the paths to $\lambda$ in the Bratteli diagram and they are generalizations of the seminormal representations of the symmetric group (due to Young \cite{Yo}), of the Brauer algebra (due to Nazarov \cite{Nz} and Leduc and Ram \cite{LR}), and of the rook monoid algebra (due to Halverson \cite{Ha}). The matrices of the representations that factor through the symmetric group are orthogonal with respect to this basis.

We give explicit actions of our generators $p_i, t_i, s_i$ on the path basis,  and we show that if $x$ is chosen so that $\RB_k(x)$ is semisimple, then the modules $\M_k^\lambda$ form a complete set of pairwise nonisomorphic representations.   Furthermore, as shown in \eqref{restrictionrule} the seminormal bases constructed here as paths in the Bratteli diagram explicitly realize the restriction rules given by the Bratteli diagram. This is a defining feature of a seminormal basis as discussed in  \cite{Ra}.

\begin{subsection} {El-Samra King Polynomials}

For a partition $\lambda$, the dimension of the irreducible $\O_n(\CC)$ module $\V^\lambda$ is given by the El-Samra King polynomial \cite{El-K} (see also \cite[(6.21)]{LR}).  These polynomials occur in the matrix entries in the seminormal representations of $\RB_n(x)$ that we determine below.   For a partition $\lambda$ let $\lambda_i$ denote the length of the $i$th column and let $\lambda_j'$ denote the length of the $j$th column.  Define the hook of the box at position $(i,j) \in \lambda$ to be
$$
h(i,j) = \lambda_i - i + \lambda_j' - j + 1.
$$
and, for each box $(i,j) \in \lambda$ define
\begin{equation}
d(i,j) = \begin{cases}
 \lambda_i  + \lambda_j  - i -  j + 1, & \text{if $i \le j$}, \\
- \lambda_i'  - \lambda_j'  + i +  j - 1, & \text{if $i > j$}. \\
\end{cases}
\end{equation}
The El-Samra King polynomials  are given by
\begin{equation}\label{ElSamraKing}
P_\lambda(x) = \prod_{(i,j) \in \lambda} \frac{ x - 1 + d(i,j)}{h(i,j)},
\end{equation}
and the irreducible $\O_n(\CC)$-module $\V^\lambda$ has dimension given by  \cite{El-K}
$$
\dim(\V^\lambda) = P_\lambda(n).
$$

\end{subsection}

\begin{subsection} {Action of  generators on paths}

In this section we describe the action of each of our generators $t_i, s_i, p_i$ of $\RB_k(x)$ on a basis indexed by the paths $\mathcal{T}_k(\lambda)$ defined in  \ref{sec:vacillating}.   For each $\lambda \in \Lambda_k$, let $\{ \, \v_S \, \vert \, S \in \mathcal{T}_k(\lambda)\}$ be a set of linearly independent vectors indexed by $\mathcal{T}_k(\lambda)$, and let
$$
\M_k^\lambda = \CC\text{-span}\{\ \v_T\ \vert\ T \in \mathcal{T}_k(\lambda)\ \}
$$
be a vector space spanned by these vectors.

If $S = (\lambda^0, \lambda^1, \ldots, \lambda^k) \in \mathcal{T}_k(\lambda)$, then for each $1 \le j \le k$   define
\begin{equation}\label{p-action}
p_j \cdot \v_S = \begin{cases}
\v_S, & \text{ if } \lambda^{j-1} = \lambda^j,\\
0, & \text{ otherwise. }
\end{cases} 
\end{equation}
Observe that the action of $p_j$ depends only on the shapes in positions $i-1$ and $i$ of  $S$.

For $1 \le i < k$, the action of the generators $t_i$ and $s_i$ depend only on the partitions in positions $i-1,i,i+1$ of  a path $S\in \mathcal{T}_k(\lambda)$.   For this reason, we use the notation,
$$
\hbox{if} \quad  S = (\lambda^{0}, \ldots, \lambda^{k}) \quad\hbox{then}\quad S_i=(\lambda^{i-1},\lambda^i,\lambda^{i+1}), \quad 0<i<k.
$$
Furthermore, we say that a pair of paths $S = (\lambda^{0}, \ldots, \lambda^{k})$ and $T = (\tau^{0}, \ldots, \tau^{k})$ are \emph{$i$-compatible} if $\sigma^{j} = \tau^{j}$ whenever $j \not= i$.  That is, $S$ and $T$ can only differ in the $i$th position. We let $\Gamma_i(S)$ denote the set of all paths that are $i$-compatible with $S$. Note that $S \in \Gamma_i(S)$, and if $T \in \Gamma_i(S)$, then $S$ and $T$ have the form
\begin{align*}
S  &= \big(\, \lambda^{(0)}  \ldots, \lambda^{(i-2)},  \gamma, \alpha,\rho, \lambda^{(i+2)}, \ldots, \lambda^{(k)}\, \big), \quad\Rightarrow\quad S_i = \big( \gamma, \alpha, \rho\, \big), \\
T  &= \big(\, \lambda^{(0)}  \ldots, \lambda^{(i-2)},  \gamma, \beta,\rho, \lambda^{(i+2)}, \ldots, \lambda^{(k)}\, \big), \quad\Rightarrow\quad T_i = \big( \gamma, \beta, \rho\, \big).  
\end{align*}
We will see that when $s_i$ or $t_i$ acts on a path indexed by $S$ it stays within the span of the paths indexed by $\Gamma_i(S)$.

For $1 \le i <k$ and  $S \in \mathcal{T}_k(\lambda)$  with $S_i = (\gamma, \alpha, \rho)$ define
\begin{equation}\label{t-action}
t_i\cdot \v_S = \sum_{ \genfrac{}{}{0pt}{}{T\in\Gamma_i(S)}{T_i = (\gamma, \beta, \rho)}} (t)^{\gamma\alpha\rho}_{\gamma\beta\rho} \, \v_T, \hskip.10in
\end{equation}
where the constant $(t)^{\gamma\alpha\rho}_{\gamma\beta\rho}$ is given by 
\begin{equation}
(t)^{\gamma\alpha\rho}_{\gamma\beta\rho} = \delta_{\rho\gamma}
\frac{\sqrt{P_{\alpha}(x-1)P_{\beta}(x-1)}}{P_\gamma(x-1)}
\end{equation}
such that $\delta_{\rho\gamma}$ is the Kronecker delta and $P_\lambda(x)$ is the El-Samra-King polynomial \eqref{ElSamraKing}.  The interesting form of this matrix entry, involving dimensions of orthogonal group modules, is explained in \cite{LR}.  The operator $t$ acts as a projection onto the invariants in $\V \ot \V$ (see  \eqref{eq:stpdef}).  Leduc and Ram use the Schur-Weyl duality and a Markov trace on the centralizer algebra to prove in \cite[Theorem 3.12]{LR} that in a seminormal representation any projection onto the invariants in a tensor product centralizer algebra will have this form.  The fact that $x-1$ is used in place of $x$ is because $\RB_k(n)$ is in Schur-Weyl duality wih $\O(n-1)$.

Similarly, for $S \in \mathcal{T}_k(\lambda)$  with $S_i = (\gamma, \alpha, \rho)$ define
\begin{equation}\label{s-action}
s_i\cdot \v_S = \sum_{ \genfrac{}{}{0pt}{}{T\in\Gamma_i(S)}{T_i = (\gamma, \beta, \rho)}} (s)^{\gamma\alpha\rho}_{\gamma\beta\rho} \, \v_T. \hskip.10in
\end{equation}
The constant $(s)^{\gamma\alpha\rho}_{\gamma\beta\rho}$ is more complicated.  First, we have the cases where $\alpha = \gamma$, $\alpha = \rho$, $\beta = \gamma$, or $\beta = \rho$. These special cases  are given by  the following Kronecker deltas,
\begin{equation}\label{sonrookpaths}
(s)^{\gamma\gamma\rho}_{\gamma\beta\rho} = \delta_{\beta\rho}, \qquad
(s)^{\gamma\rho\rho}_{\gamma\beta\rho} = \delta_{\gamma\beta}, \qquad
(s)^{\gamma\alpha\rho}_{\gamma\gamma\rho} = \delta_{\alpha\rho}, \qquad
(s)^{\gamma\alpha\rho}_{\gamma\rho\rho} = \delta_{\gamma\alpha}.
\end{equation}
In particular, equation \eqref{sonrookpaths} says that if $S$ and $T$ are $i$-compatible and $S_i = (\gamma,\gamma,\rho)$ and $T_i = (\gamma,\rho,\rho)$, then 
$$
s_i\cdot \v_S = \v_T \qquad\hbox{and}\qquad s_i\cdot \v_T = \v_S.
$$
and in the special case where $S_i = (\gamma,\gamma,\gamma)$ we have $s_i \cdot \v_S = \v_S$.

Now, assume that $\alpha \not = \gamma$, $\alpha \not= \rho$, and $\alpha \not=\beta$. Then 
\begin{align}
(s)_{\gamma\alpha\rho}^{\gamma\alpha\rho} &= \begin{cases}
\frac{1}{\diamondsuit\binom{\gamma\alpha\rho}{\gamma\alpha\rho}},  & \text{if $\rho \not=\gamma$}, \\
\frac{1}{\diamondsuit\binom{\gamma\alpha\rho}{\gamma\alpha\rho}} \left(1 - \frac{P_\alpha(x-1)}{P_\gamma(x-1)}\right),  & \text{if $\rho =\gamma$}, \\
\end{cases}\\
(s)_{\gamma\alpha\rho}^{\gamma\beta\rho} &= \begin{cases}
\frac{\sqrt{(\diamondsuit\binom{\gamma\alpha\rho}{\gamma\alpha\rho}-1)(\diamondsuit\binom{\gamma\alpha\rho}{\gamma\alpha\rho}+1)}}{|\diamondsuit\binom{\gamma\alpha\rho}{\gamma\alpha\rho}|},  & \text{if $\rho \not=\gamma$}, \\
\frac{-1}{\diamondsuit\binom{\gamma\alpha\rho}{\gamma\beta\rho}} \frac{\sqrt{P_\alpha(x-1)P_\beta(x-1)}}{P_\gamma(x-1)},  & \text{if $\rho =\gamma$}, \\
\end{cases}
\end{align}
with (and the following definition holds for the special case $\alpha = \beta$),
\begin{equation}
\begin{array}{c}
\diamondsuit\binom{\gamma\alpha\rho}{\gamma\beta\rho} 
\end{array}= \ 
\begin{cases}
\pm\left(ct(b) - ct(a)\right), &  \rho = \beta\pm b \text{ and } \alpha = \gamma\pm a, \\ 
\pm\left(x-2 + ct(b) + ct(a)\right), &  \rho = \beta\pm b \text{ and } \alpha = \gamma\mp a, \\ 
\end{cases}
\end{equation}
where $\lambda = \mu \pm b$ indicates that the partition $\lambda$ is obtained by adding (subtracting) a box $a$ from the partition $\mu$ and where  $ct(a)$ is the \emph{content} of the box $a$, namely
$$
ct(a) = j - i, \qquad \hbox{if $a$ is in row $i$ and column $j$}.
$$

\begin{rem}\label{rem:seminormalrestriction} If a segment of a path consists only of adding boxes, then the action of $s_i$ on that part of the path corresponds  to Young's seminormal representation of the symmetric group $\mathsf{S}_k$ found in \cite[REF]{Yo}.   If the segment consists of adding boxes and doing nothing then the action of $s_i$ and $p_i$ on that part of the path corresponds to the seminormal representation of the rook monoid $\mathsf{R}_k$ found in  \cite[Thm.\ 3.2]{Ha}.  If the segment consists of adding and deleting boxes (but not doing nothing), then the action of $s_i$ and $t_i$ on that part of the path corresponds to the seminormal representation of the Brauer algebra $\mathsf{B}_k(x)$ given in Nazarov \cite[Sec.\ 3]{Nz} and Leduc-Ram \cite[Thm.\ 6.22]{LR}.
\end{rem}

\end{subsection}

\begin{subsection}{Seminormal representations}

For each $\lambda \in \Lambda_k$, extend the action in  \eqref{p-action}, \eqref{t-action},  \eqref{s-action}  linearly to  $\M_k^\lambda$ and 
define an action of $\RB_k(x)$ on $\M_k^\lambda$ by extending the action of the generators to all of $\RB_k(x)$.

\begin{thm} For each $k \ge 0$ and each $\lambda \in \Lambda_k$, the actions of  $p_i, t_i, s_i$ given in \eqref{p-action}, \eqref{t-action},  \eqref{s-action} extend linearly to make $\M_k^\lambda$ an $\RB_k(x)$ module.  
\end{thm}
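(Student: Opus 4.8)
The plan is to use the presentation of $\RB_k(x)$ from Section~2.4: since $s_i,t_i,p_i$ generate the algebra subject to the relations \eqref{Snrels}--\eqref{RBnrels}, it suffices to check that the operators defined in \eqref{p-action}, \eqref{t-action}, and \eqref{s-action} satisfy each of these relations on the path basis of $\M_k^\lambda$, after which linear extension produces a representation $\RB_k(x)\to\End(\M_k^\lambda)$. Two preliminary observations organize the whole computation. First, the action is \emph{local}: $p_j$ is diagonal and depends only on positions $j-1,j$ of a path, while $s_i$ and $t_i$ depend only on positions $i-1,i,i+1$ and modify only position $i$, so both preserve $\opspan\{\v_T : T\in\Gamma_i(S)\}$. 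Second, because $x$ is chosen so that $\RB_k(x)$ is semisimple, I would check that every denominator that appears---the values $P_\gamma(x-1)$ and the axial-distance factors $\diamondsuit\binom{\gamma\alpha\rho}{\gamma\beta\rho}$---is nonzero, exactly as in Young's, the Brauer, and the rook-monoid constructions; this guarantees the matrix entries are well defined.

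Granting these, I would first dispatch every relation with $|i-j|>1$---namely \eqref{Snrels}(b), \eqref{Bnrels}(b), \eqref{Rnrels}(d), and \eqref{RBnrels}(a)---using locality alone: the single position written by one generator lies outside the positions read by the other, so the two operators commute; and \eqref{Rnrels}(b), i.e.\ $p_ip_j=p_jp_i$, holds because the $p_i$ are simultaneously diagonal on the path basis. For the relations internal to a single subalgebra I would reduce, window by window, to the three seminormal representations recorded in Remark~\ref{rem:seminormalrestriction}: on a window of pure box-additions the $s_i$ reproduce Young's seminormal representation of $\mathsf S_k$; on an add/delete window the $s_i,t_i$ reproduce the Nazarov--Leduc--Ram representation of $\mathsf B_k(x)$; and on an add/do-nothing window the $s_i,p_i$ reproduce Halverson's representation of $\mathsf R_k$. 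On such ``pure'' windows the relations \eqref{Snrels}(a),(c), \eqref{Bnrels}, and \eqref{Rnrels} hold because the cited representations already satisfy them.

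The genuinely new content is the mixed relations \eqref{RBnrels}(b)--(e), which couple $t_i$ and $p_i$, together with windows in which a path interleaves add, delete, and ``stay'' steps. The former I expect to reduce to short telescoping computations, since $t_i$ is supported on segments with $\rho=\gamma$ (the factor $\delta_{\rho\gamma}$) and $p_i$ projects onto segments with $\beta=\gamma$. For example, abbreviating $P_\mu=P_\mu(x-1)$ and writing $S_i=(\gamma,\alpha,\gamma)$, one finds $p_it_i\,\v_S=\tfrac{\sqrt{P_\alpha P_\gamma}}{P_\gamma}\,\v_{T}$ with $T_i=(\gamma,\gamma,\gamma)$, and applying $t_i$ to this returns $\sum_\beta \tfrac{\sqrt{P_\alpha P_\beta}}{P_\gamma}\,\v_{T_\beta}=t_i\,\v_S$, which is \eqref{RBnrels}(d); the remaining mixed relations are analogous identities.

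The main obstacle, and the bulk of the work, will be the relations involving $s_i$ on genuinely mixed windows---above all the braid relation \eqref{Snrels}(c) and the Brauer relations \eqref{Bnrels}(d),(e)---evaluated on segments that combine added, deleted, and repeated partitions, since these are not governed by any single one of the three classical representations. Here I would carry out a finite case analysis of the explicit entries $(s)^{\gamma\alpha\rho}_{\gamma\beta\rho}$ from \eqref{sonrookpaths} and the displays that follow it, relying on identities among the contents $ct(a)$, the axial distances $\diamondsuit\binom{\gamma\alpha\rho}{\gamma\beta\rho}$, and ratios of El-Samra--King polynomials. The structural fact that makes these interface cases consistent is that the $t$-entries have the universal form Leduc--Ram derive for any projection onto the invariants of a tensor-product centralizer; checking that the $s$-entries dovetail with this form across all three regimes is where the real verification lies.
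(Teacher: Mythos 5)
Your plan is essentially the paper's proof: check the defining relations \eqref{Snrels}--\eqref{RBnrels} on the path basis, dispatch the $|i-j|>1$ relations by locality, invoke the classical seminormal representations of $\mathsf{S}_k$, $\mathsf{B}_k(x)$, and $\mathsf{R}_k$ (Remark \ref{rem:seminormalrestriction}) for the pure windows, and verify the mixed $t_i,p_i$ relations by short telescoping computations --- indeed your verification of \eqref{RBnrels}(d) is exactly the paper's example calculation. The only divergence is at the step you flag as the bulk of the work: where you propose a case-by-case analysis of the $(s)^{\gamma\alpha\rho}_{\gamma\beta\rho}$ entries on genuinely mixed windows, the paper instead observes (in its verification of \eqref{Bnrels}(e)) that by \eqref{sonrookpaths} the coefficients attached to the intermediate paths with degenerate or do-nothing steps vanish, so both sides reduce to sums over paths that only add or delete boxes, where the identity is inherited from the Brauer-algebra seminormal forms of Nazarov and Leduc--Ram; the paper remarks that your case-by-case route is also possible, so your plan would succeed but carries more computational weight than necessary.
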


\begin{proof}  To show that $\M_k^\lambda$ is an $\RB_k(x)$-module, we verify  relations  \eqref{Snrels} ,\eqref{Bnrels}, \eqref{Rnrels},  \eqref{RBnrels}.  Many of these verifications come for free from $\CC\mathsf{S}_n, \mathsf{B}_n(x+1), \CC\mathsf{R}_n$ and  Remark \ref{rem:seminormalrestriction}.   The generators  $p_i, t_i, s_i$ act  locally in the sense that they only care about the parts in positions $i-1,i,i+1$ of the path, so \eqref{Snrels}(b), \eqref{Bnrels}(b), \eqref{Rnrels}(b),(d), and \eqref{RBnrels}(a) all follow from the fact that the generators commute when they are working on non-overlapping parts of  paths.
Furthermore, we  only need to check the relations when we ``do nothing" on one of these steps (that is, $\lambda^{i-1} = \lambda^i$ or $\lambda^i = \lambda^{i+1}$ or both), since the other cases are handled in  $\CC\mathsf{S}_n, \mathsf{B}_n(x)$, and $\CC\mathsf{R}_n$.

The  symmetric group relations \eqref{Snrels} are proven on Brauer type paths in \cite{Nz} and \cite{LR}. These relations are easy to check on paths that do nothing at least once.  As an example, we verify the braid relation \eqref{Snrels}(c) on one such path.  We let $(\delta,\delta,\gamma,\rho)$ denote the path $S = (\lambda^0, \ldots,\lambda^k)$ such that $(\lambda^{i-1},\lambda^{i},\lambda^{i+1},\lambda^{i+2}) = (\delta,\delta,\gamma,\rho)$. Then,
\begin{align*}
s_is_{i+1}s_i(\delta,\delta,\gamma,\rho) & = s_is_{i+1}(\delta,\gamma,\gamma,\rho)  = s_i(\delta,\gamma,\rho,\rho) = \sum_{\alpha} (s)^{\delta\gamma\rho}_{\delta\alpha\rho} (\delta,\alpha,\rho,\rho),\\
s_{i+1}s_is_{i+1}(\delta,\delta,\gamma,\rho) & = \sum_{\alpha} (s)^{\delta\gamma\rho}_{\delta\alpha\rho} s_{i+1}s_i(\delta,\delta,\alpha,\rho)  = \sum_{\alpha} (s)^{\delta\gamma\rho}_{\delta\alpha\rho} s_{i+1}(\delta,\alpha,\alpha,\rho)= \sum_{\alpha} (s)^{\delta\gamma\rho}_{\delta\alpha\rho} (\delta,\alpha,\rho,\rho) .
\end{align*}

In \cite[3.12]{LR} a general formula for action of  $t_i$ is given, since $t$ is projection onto the invariants in $\V \otimes \V$. The Leduc-Ram formula specializes in our case to  \ref{t-action} and the relations involving only  $t_i$'s  follows from the general principles in \cite{LR}.  Most of these relations are also easily verified directly.  One subtlety  is that when $t_i$ acts on a path (even on a path that has no do-nothings) the output is a sum of paths that includes paths that do nothing, and to verify the relations these do-nothing-paths need to be considered separately.   As an example, we verify one of the more complicated relations  \eqref{Bnrels}(e). 
\begin{align*}
s_it_{i+1}t_i(\gamma,\delta,\tau,\rho) &= \delta_{\gamma,\tau} \sum_{\alpha}  t^{\gamma\delta\gamma}_{\gamma\alpha\gamma} s_it_{i+1}(\gamma,\alpha,\gamma,\rho)= \delta_{\gamma,\tau}   t^{\gamma\delta\gamma}_{\gamma\rho\gamma} s_it_{i+1}(\gamma,\rho,\gamma,\rho)\\
&= \delta_{\gamma,\tau} \sum_{\alpha}  t^{\gamma\delta\gamma}_{\gamma\rho\gamma} t^{\rho\gamma\rho}_{\rho\alpha\rho}s_i(\gamma,\rho,\alpha,\rho)
= \delta_{\gamma,\tau} \sum_{\alpha} \sum_{ \genfrac{}{}{0pt}{}{\beta}{\beta\not=\gamma,\beta\not=\alpha}} 
 t^{\gamma\delta\gamma}_{\gamma\rho\gamma} t^{\rho\gamma\rho}_{\rho\alpha\rho} s_{\gamma\beta\alpha}^{\gamma \rho\alpha}(\gamma,\beta,\alpha,\rho) \\
s_{i+1}t_i(\gamma,\delta,\tau,\rho) &= \delta_{\gamma,\tau} \sum_{\beta}  t^{\gamma\delta\gamma}_{\gamma\beta\gamma} s_{i+1}(\gamma,\beta,\gamma,\rho)= 
\delta_{\gamma,\tau} \sum_{\beta}\sum_{ \genfrac{}{}{0pt}{}{\alpha}{\alpha\not=\beta,\alpha\not=\rho}}   t^{\gamma\delta\gamma}_{\gamma\beta\gamma} 
s^{\beta\gamma\rho}_{\beta\alpha\rho}(\gamma,\beta,\alpha,\rho) 
\end{align*}
Note in the first case that $s_{\gamma\beta\alpha}^{\gamma \rho\alpha}=0$ when $\alpha = \rho$, and in the second case $s^{\beta\gamma\rho}_{\beta\alpha\rho}=0$ if $\beta = \gamma$.  So the sum is over all paths that always add or subtract boxes in these components, and thus the two calculations are equal because they are equal in the Brauer algebra \cite{Nz}, \cite{LR}.   It is also possible to prove this directly using a case-by-case examination of the coefficients $t^{\gamma\delta\gamma}_{\gamma\rho\gamma} t^{\rho\gamma\rho}_{\rho\alpha\rho} s_{\gamma\beta\alpha}^{\gamma \rho\alpha}$ and $t^{\gamma\delta\gamma}_{\gamma\beta\gamma} s^{\beta\gamma\rho}_{\beta\alpha\rho}$.

The  relations in \eqref{Rnrels} are straightforward. The most interesting one of them is \eqref{Rnrels}(c):
\begin{align*}
s_i p_i(\gamma,\alpha,\rho) &=  \delta_{\gamma,\alpha} s_i(\gamma,\gamma,\rho)=  \delta_{\gamma,\alpha} (\gamma,\rho,\rho)\\
p_is_i(\gamma,\alpha,\rho) &=  \sum_{\beta} s^{\gamma\alpha\rho}_{\gamma\beta\rho} p_i(\gamma,\beta,\rho)= s^{\gamma\alpha\rho}_{\gamma\gamma\rho} (\gamma,\gamma,\rho)
\end{align*}
which are equal because $s^{\gamma\alpha\rho}_{\gamma\gamma\rho} =\delta_{\alpha,\rho}$ (see \eqref{sonrookpaths}).
The  relations in \eqref{RBnrels} are similarly straightforward, and we verify  \eqref{RBnrels}(c) as an example:
\begin{align*}
t_i p_i t_i(\gamma,\alpha,\rho) &=  \delta_{\gamma,\rho} \sum_{\beta} t^{\gamma\alpha\gamma}_{\gamma\beta\gamma} t_i p_i(\gamma,\beta,\gamma) 
=  \delta_{\gamma,\rho}  t^{\gamma\alpha\gamma}_{\gamma\gamma\gamma} t_i(\gamma,\gamma,\gamma) 
=  \delta_{\gamma,\rho} \sum_{\beta}  t^{\gamma\alpha\gamma}_{\gamma\gamma\gamma} t^{\gamma\gamma\gamma}_{\gamma\beta\gamma}(\gamma,\beta,\gamma).
\end{align*}
Now $ t^{\gamma\alpha\gamma}_{\gamma\gamma\gamma} t^{\gamma\gamma\gamma}_{\gamma\beta\gamma} = 
\frac{\sqrt{P_\alpha(x-1) P_\gamma(x-1)}}{P_\gamma(x-1)}
\frac{\sqrt{P_\gamma(x-1) P_\beta(x-1)}}{P_\gamma(x-1)} = \frac{\sqrt{P_\alpha(x-1) P_\beta(x-1)}}{P_\gamma(x-1)}  = t^{\gamma\alpha\gamma}_{\gamma\beta\gamma}$ and so the displayed sum equals 
$t_i (\gamma,\alpha,\rho)$.   This proves that $\M_k^\lambda$ is an $\RB_k(x)$ module.
\end{proof}

For $\lambda \in \Lambda_k$ let $\Lambda_{k-1}^\lambda = \{\, \mu \in \Lambda_{k-1}\, \vert\,  \mu = \lambda \hbox{ or } \mu = \lambda \pm \square\, \}$ so that $\Lambda_{k-1}^\lambda$ is the set of partitions $\mu\in\Lambda_{k-1}$ that are connected to $\lambda\in\Lambda_{k}$  by an edge in $\mathcal{B}$.   Consider $\RB_{k-1}(x) \subseteq \RB_k(x)$ to be the subalgebra spanned by $p_i$ for $1 \le i \le k-1$ and $t_i, s_i$ for $1 \le i \le k-2$.  Equivalently, $\RB_{k-1}(x)$ is spanned by the diagrams in $\RBc_k$ having a vertical edge connecting $k$th vertex in the top row to the $k$th vertex in the bottom row.   Now order the basis of $\M_k^\lambda$ in such a way that the paths that pass through $\mu \in \Lambda_{k-1}^\lambda$ are grouped together in the ordering, and  let $\rho_k^\lambda$ denote the representation corresponding to the module $\M_k^\lambda$ with respect to this basis. Then since the action of $\RB_{k-1}(x)$ does not see the last edge in the path to $\lambda$, we have the following key property of a seminormal basis,
\begin{equation}\label{seminormalbasis}
\rho_k^\lambda(d) = \bigoplus_{ \genfrac{}{}{0pt}{}{\mu \in \Lambda_{k-1}}{\mu = \lambda \text{ or } \mu = \lambda\pm\square}}  \rho_{k-1}^\mu(d), \qquad\hbox{for all $d \in \RB_{k-1}(x)$}.
\end{equation}
The restriction rules for $\RB_{k-1}(x) \subseteq \RB_k(x)$ follow:
\begin{equation}\label{restrictionrule}
\Res^{\RB_k(x)}_{\RB_{k-1}(x)}(\M_k^\lambda) = \bigoplus_{ \genfrac{}{}{0pt}{}{\mu \in \Lambda_{k-1}}{\mu = \lambda \text{ or } \mu = \lambda\pm\square}}  \M_{k-1}^\mu.
\end{equation}
and \eqref{seminormalbasis} is an explicit realization of this decomposition on the basis level.  

Since the restriction rules \eqref{restrictionrule} distinguish each $\lambda$, we see that $\M_k^\lambda \not\cong \M_k^\gamma$ if $\gamma \not= \lambda$.   Thus, we have constructed a set of nonisomorphic $\RB_k(x)$-modules indexed by $\Lambda_k$ with dimension $\dim(\M_k^\lambda) = m_k^\lambda$.  When $x \in \CC$ is chosen such that $\RB_k(x)$ is semisimple, we know from  \ref{semisimplesection} that these modules have the same index set and the same dimensions as the irreducible $\RB_k(x)$-modules. Thus, they are irreducible, and we have proven the following theorem.

\begin{thm} If $x \in \CC$ is chosen so that $\RB_k(x)$ is semisimple, then $\{\, \M_k^\lambda \, \vert\, \lambda \in \Lambda_k\,\}$ is a complete set of irreducible $\RB_k(x)$-modules. 
\end{thm}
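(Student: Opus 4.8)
The plan is to prove the statement by induction on $k$, showing that the constructed modules $\M_k^\lambda$ are precisely the irreducible $\RB_k(x)$-modules, and using the restriction rule \eqref{restrictionrule} together with the representation-theoretic bookkeeping assembled in \ref{semisimplesection}. The base case $k=0$ is immediate, since $\RB_0(x)=\CC$ and $\M_0^\emptyset=\CC$. For the inductive step I would assume that $\{\,\M_{k-1}^\mu \mid \mu\in\Lambda_{k-1}\,\}$ is a complete set of pairwise non-isomorphic irreducible $\RB_{k-1}(x)$-modules. So that the inductive hypothesis is available, I would work with $x$ avoiding the finite bad set of \ref{semisimplesection} for every $\RB_j(x)$ with $j\le k$ (a finite union), so that the whole tower is semisimple; this is the natural reading of the hypothesis.

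First I would record the consequences of \eqref{restrictionrule}. By the inductive hypothesis the decomposition $\Res^{\RB_k(x)}_{\RB_{k-1}(x)}\M_k^\lambda=\bigoplus_{\mu\in\Lambda_{k-1}^\lambda}\M_{k-1}^\mu$ is a sum of pairwise distinct irreducibles, so the restriction is multiplicity free; consequently $\End_{\RB_k(x)}(\M_k^\lambda)$ embeds in the commutative algebra $\End_{\RB_{k-1}(x)}(\Res\M_k^\lambda)$ and is itself commutative, which forces $\M_k^\lambda$ to be multiplicity free as an $\RB_k(x)$-module. The remaining, and genuinely hard, point is to upgrade \emph{multiplicity free} to \emph{irreducible}. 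I would do this by a connectivity argument on the path basis: any $\RB_k(x)$-submodule $W\subseteq\M_k^\lambda$ is in particular $\RB_{k-1}(x)$-stable, hence a sub-sum $W=\bigoplus_{\mu\in A}\M_{k-1}^\mu$ for some $A\subseteq\Lambda_{k-1}^\lambda$. One then shows that the generators $p_k$, $t_{k-1}$, $s_{k-1}$, whose explicit matrix entries appear in \eqref{p-action}, \eqref{t-action}, \eqref{s-action}, link the components $\M_{k-1}^\mu$, $\mu\in\Lambda_{k-1}^\lambda$, into a single connected graph; since these entries are built from contents and the El-Samra--King polynomials $P_\nu(x-1)$ and are nonzero for semisimple $x$, the only stable choices are $A=\emptyset$ and $A=\Lambda_{k-1}^\lambda$, so $\M_k^\lambda$ is irreducible. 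On the segments of a path that only add boxes, only add/delete boxes, or only add/do-nothing, this connectivity is exactly the irreducibility of the seminormal modules of $\mathsf{S}_k$, $\mathsf{B}_k(x)$, and $\mathsf{R}_k$ cited in Remark \ref{rem:seminormalrestriction}; the new work is the interaction of the do-nothing steps with the others.

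Finally I would assemble irreducibility, non-isomorphism, and completeness. Non-isomorphism does \emph{not} follow from \eqref{restrictionrule} alone: for example $\Lambda_1^{(2)}=\Lambda_1^{(1,1)}=\{(1)\}$ at $k=2$, so $\M_2^{(2)}$ and $\M_2^{(1,1)}$ have identical restrictions to $\RB_1(x)$ and are separated only by the scalar $\pm1$ by which $s_1$ acts. I would therefore distinguish the $\M_k^\lambda$ by their central characters, reading off the eigenvalues of a generating family of $Z(\RB_k(x))$ on the path basis; these depend on the terminal shape and recover $\lambda$, giving injectivity $\lambda\mapsto[\M_k^\lambda]$. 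With each $\M_k^\lambda$ irreducible, the family pairwise non-isomorphic and indexed by $\Lambda_k$, these $|\Lambda_k|$ distinct irreducibles exhaust the $|\Lambda_k|$ isomorphism classes guaranteed by \ref{semisimplesection} (the dimension identity $\dim\RB_k(x)=\sum_{\lambda}(m_k^\lambda)^2$ of \ref{sec:vacillating} providing a consistency check), completing the induction. The main obstacle is the irreducibility step: the restriction rule and the dimensions $m_k^\lambda$ are by themselves insufficient, since $\Lambda_{k-1}^\lambda$ may split as a disjoint union of the down-sets of other level-$k$ shapes with matching dimensions (e.g. $\Lambda_2^{(2,1)}=\Lambda_2^{(3)}\sqcup\Lambda_2^{(1,1,1)}$ with $m_3^{(2,1)}=m_3^{(3)}+m_3^{(1,1,1)}$), so one must genuinely exploit the explicit action of the top generators.
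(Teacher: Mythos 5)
Your strategy is sound, but it is genuinely different from---and substantially more demanding than---the argument the paper actually gives. The paper does not induct on $k$, does not prove irreducibility directly, and never touches the matrix entries again after establishing \eqref{seminormalbasis} and \eqref{restrictionrule}: it asserts that the restriction rules ``distinguish each $\lambda$,'' hence the $\M_k^\lambda$ are pairwise non-isomorphic, and then concludes irreducibility purely from the bookkeeping of \ref{semisimplesection}, namely that one now has $|\Lambda_k|$ pairwise non-isomorphic modules whose index set and dimensions $m_k^\lambda$ agree with those of the irreducibles supplied by the Tits deformation theorem. Your two examples pinpoint exactly where this shortcut is loose. First, $\Lambda_1^{(2)}=\Lambda_1^{(1,1)}=\{(1)\}$ (and likewise $\Lambda_1^{\emptyset}=\Lambda_1^{(1)}$), so for $k\le 2$ the restriction rule alone does not separate all pairs; those modules are separated only by the explicit action ($s_1$ acting by $\pm1$, $t_1$ acting by $0$ or by a rank-one matrix). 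Second, the coincidence $m_3^{(2,1)}=m_3^{(3)}+m_3^{(1,1,1)}$ together with $\Lambda_2^{(2,1)}=\Lambda_2^{(3)}\sqcup\Lambda_2^{(1,1,1)}$ shows that ``same index set, same dimensions, pairwise non-isomorphic, compatible restrictions'' is consistent with $\M_3^{(2,1)}$ being decomposable; only the top-generator action (here $s_2$ links the two restriction components of $\M_3^{(2,1)}$ with coefficient $\sqrt{3}/2$) rules that out. So the connectivity step you call the main obstacle is precisely the content that makes the theorem airtight: the paper's route buys brevity by eliding it, yours buys rigor by confronting it.

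Two caveats on your own sketch. The pivotal steps are stated, not executed: (i) the nonvanishing of the linking entries for \emph{every} semisimple $x$ needs an argument, since the coefficients involve $P_\nu(x-1)$ and $\diamondsuit$-denominators whose vanishing locus must be reconciled with the semisimple locus (note that when $|\lambda|\le k-2$ the single generator $t_{k-1}$, with entries $\sqrt{P_\alpha(x-1)P_\beta(x-1)}/P_\lambda(x-1)$, already connects all of $\Lambda_{k-1}^\lambda$, whereas for $|\lambda|\ge k-1$ you must fall back on the $s_{k-1}$-entries and exclude $\diamondsuit=\pm1$ as in Young's classical argument); and (ii) the central-character step is the vaguest part of the plan and is heavier than necessary---for $k\ge 3$ the down-sets $\Lambda_{k-1}^\lambda$ \emph{do} determine $\lambda$ (if $\lambda$ has two removable boxes it is the union of its $\lambda-\square$'s, and the rectangle case is easily handled), so induction plus a hand check at $k\le 2$ gives non-isomorphism with no appeal to the center. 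Finally, your strengthening of the hypothesis to semisimplicity of the whole tower is unnecessary: $p_k\,\RB_k(x)\,p_k\cong\RB_{k-1}(x)$, and an idempotent truncation $eAe$ of a semisimple algebra is semisimple, so semisimplicity of $\RB_k(x)$ already propagates down to every $\RB_j(x)$ with $j\le k$ and the inductive hypothesis is available under the theorem's stated hypothesis.
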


\end{subsection}

\end{section}

\end{document}